\documentclass[titlepage]{amsart}
\usepackage{amsaddr}
\usepackage{amssymb,amsthm}
\usepackage{amsfonts}
\usepackage{enumerate}
\usepackage{amsmath}
\usepackage{amsrefs}
\usepackage{graphicx}
\usepackage[utf8]{inputenc}
\usepackage[english]{babel}
\usepackage{color}
\usepackage{hyperref}
\usepackage{pgf}
\usepackage{colortbl,xcolor}
\usepackage{multicol}


\theoremstyle{plain}
\newtheorem{acknowledgement}{Acknowledgement}

\newtheorem{definition}{Definition}[section]
\newtheorem{example}{Example}[section]

\newtheorem{lemma}{Lemma}[section]

\newtheorem{proposition}{Proposition}[section]

\newtheorem{theorem}{Theorem}[section]
\numberwithin{equation}{section}

  
\begin{document}

	\title[Zeta Functions for Arithmetically Non Degenerate Polynomials]{Igusa's Local Zeta Functions and Exponential Sums for Arithmetically Non Degenerate Polynomials}
	\author{Adriana A. Albarracin-Mantilla}
	\address{Centro de Investigaci\'{o}n y de Estudios Avanzados del Instituto
		Polit\'{e}cnico Nacional\\
		Departamento de Matem\'{a}ticas--Unidad Quer\'{e}taro\\
		Libramiento Norponiente \#2000, Fracc. Real de Juriquilla. Santiago de
		Quer\'{e}taro, Qro. 76230\\
		M\'{e}xico.}
	\address{Universidad Industrial de Santander\\
		Escuela de Matem\'{a}ticas\\
		Cra. 27, Calle 9, Edificio 45\\
		Bucaramanga, Santander. 680001\\
		Colombia.}
	\email{aaam@math.cinvestav.mx, alealbam@uis.edu.co}
	\author{Edwin Le\'{o}n-Cardenal}
	\address{CONACYT -- Centro de Investigaci\'{o}n en Matemáticas A.C.\\
		Unidad Zacatecas\\
		Avenida Universidad \#222, Fracc. 
		La Loma, Zacatecas, Zac. 98068\\
		M\'{e}xico.}
	\email{edwin.leon@cimat.mx}
		
\subjclass[2000]{Primary 11S40, 14G10; Secondary 11T23, 14M25}
\keywords{Igusa's zeta function, degenerate curves, Newton polygons, non--degeneracy conditions, exponential sums $\mod{p^m}$}
\begin{abstract}
		We study the twisted local zeta function associated to a polynomial in two variables with coefficients in a non--Archimedean local field of arbitrary characteristic. Under the hypothesis that the polynomial is arithmetically non degenerate, we obtain an explicit list of candidates for the poles in terms of geometric data obtained from a family of arithmetic Newton polygons attached to the polynomial. The notion of arithmetical non degeneracy due to Saia and Z{\'u}{\~n}iga-Galindo is weaker than the usual notion of non degene\-racy due to Kouchnirenko. As an application we obtain asymptotic expansions for certain exponential sums attached to these polynomials.
\end{abstract}
\maketitle
\section{Introduction}
Local zeta functions play a relevant role in mathematics, since they are related with several mathematical theories as partial differential equations, number theo\-ry, singularity theory, among others, see for example \cites{AG-ZV,De,IBook,Var}.  In this article we study `twisted' versions of the local zeta functions for arithmetically non--degenerate polynomials studied by Saia and Z{\'u}{\~n}iga-Galindo in \cite{SaZu}. Let  $L_v$ be a non--Archimedean local field of arbitrary characteristic with valuation $v$, let  $O_v$ be its ring of integers with group of units  $O_v^\times$, let $P_v$ be the maximal ideal in $O_v$. We fix a uniformizer parameter $\pi$ of $O_v$.  We assume that the residue field of $O_v$ is $\mathbb{F}_q$, the finite field with $q$ elements. The absolute value for $L_v$ is defined by $|z|:=|z|_v=q^{-v(z)}$, and for $z\in L_v^\times$, we define the angular component of $z$ by $ac(z)=z\pi^{-v(z)}$. We consider $f(x,y)\in O_v[x,y]$  a non-constant polynomial and $\chi$ a character of $O_v^\times$, that is, a continuous homomorphism from $O_v^\times$  to the unit circle, considered as a subgroup of $\mathbb{C}^\times$. When $\chi(z)=1$ for any $z\in O_v^\times$, we will say that $\chi$ is the trivial character and it will be denoted by $\chi_{triv}$.  We associate to these data the local zeta function,
$$Z(s, f, \chi) :=\int\limits_{O_v^2}\chi(ac\ f(x,y))\ |f(x, y)|^s\ |dxdy|,\quad s\in\mathbb{C},$$
where $Re(s) > 0$, and $|dxdy|$ denotes the Haar measure of $(L_v^2,+)$ normalized such that the measure of $O_v^2$ is one.

It is not difficult to see that $Z(s, f, \chi)$ is holomorphic on the half plane $Re(s) > 0$. Furthermore, in the case of characteristic zero ($char(L_v)=0$), Igusa \cite{ICom} and Denef \cite{De84} proved that  $Z(s, f, \chi_{triv})$  is a rational function of $q^{-s}$ for an arbitrary polynomial in se\-ve\-ral variables. When $char(L_v)>0$, new techniques are needed since there is no a general theorem of resolution of singularities, nor an equivalent method of $p-$adic cell decomposition. However the stationary phase formula, introduced by Igusa, has proved to be useful in several cases, see e.g. \cites{LeIbSe,ZNag} an the references therein. 

A considerable advance in the study of local zeta functions has been obtained for the generic class of non--degenerate polynomials. Roughly speaking the idea is to attach a Newton polyhedron to the polynomial $f$ (more generally to an analytic function) and then define a non degeneracy condition with respect to the Newton polyhedron. Then one may construct a toric variety associated to the Newton polygon, and use the well known toric resolution of singularities in order to prove the meromorphic continuation of $Z(s, f, \chi)$, see e.g. \cite{AG-ZV} for a good discussion about the Newton polyhedra technique in the study of local zeta functions. The first use of this approach was pioneered by Varchenko \cite{Var} in the Archimedean case. After Varchenko's article, several authors have been used their methods to study local zeta functions and their connections with oscillatory integrals and exponential sums, see for instance \cites{De95,DeHoo,LeVeZu,LiMe,SaZu,VeZu,ZNag} and the references therein.

In \cite{SaZu} Saia and Z{\'u}{\~n}iga-Galindo introduced the notion of arithmetically non--degeneracy for polynomials in two variables, this notion is weaker than the classical notion of non--degeneracy due to Kouchnirenko. Then they studied local zeta functions $Z(s, f, \chi_{triv})$  when $f$ is an arithmetically non--degenerate polynomial with coefficients in a non--Archimedean local field of arbitrary characteristic. They established the existence of a meromorphic continuation for $Z(s, f, \chi_{triv})$  as a rational function of $q^{-s}$, and they gave an explicit list of candidate poles for $Z(s, f, \chi_{triv})$  in terms of a family of arithmetic Newton polygons which are associated with $f$. 

In this work we study the local zeta functions $Z(s, f, \chi)$ for arithmetically mo\-du\-lo $\pi$ non degenerate polynomials in two variables over a non--Archimedean local field, when $\chi$ is non necessarily the trivial character. By using the techniques of \cite{SaZu} we obtain an explicit list of candidate poles of $Z(s, f, \chi)$  in terms of the data of the geometric Newton polygon for $f$ and the equations of the straight segments defining the  boundaries of the arithmetic Newton polygon attached to $f$, see Theorem \ref{Thm 6.1}.  As an application we describe the asymptotic expansion for oscillatory integrals attached to $f$, see Theorem \ref{Thm 7.1}. On the other hand, there have been a lot interest on estimation of exponential sums $\mod{p^m}$ attached to non--degenerate polynomials in the sense of Kouchnirenko, see e.g. \cites{Clu08,Clu10,DeHoo,DeSp,ZNag}. Our estimations are for a class of polynomials in two variables which are degenerate in the sense of Kouchnirenko, thus the techniques developed in the above mentioned articles can not be applied.

We would like to thank to Professor W. A. Z{\'u}{\~n}iga-Galindo for pointing out our attention to this problem and for very useful suggestions about this work.
\section{Geometric Newton Polygons and Non-degeneracy Conditions} \label{Sec2}
We set $\mathbb{R}_+=\{x \in \mathbb{R}| \ x\geqslant 0\}$, and we denote by $\langle \ ,  \rangle$ the usual inner product of $\mathbb{R}^2$, we also identify the dual vector space with $\mathbb{R}^2$.

 Let $f(x,y)=\sum_{i,j}a_{i,j}x^iy^j,$  be a non-constant polynomial in $L_v[x,y]$ satisfying $f(0,0)=0$. The \emph{support} of $f$ is defined as $supp(f)=\{(i,j) \in \mathbb{N}^2 | \ a_{i,j} \neq 0\}$ and the \emph{Geometric Newton polygon} of $f$, denoted by $\Gamma^{geom}(f)$, is the convex hull in $\mathbb{R}_+^2$ of the set $\bigcup_{(i,j) \in supp(f)}((i,j) +\mathbb{R}^2_+ ).$

A  \textit{proper face} of $\Gamma^{geom}(f)$ is a non empty convex subset  $\tau$ which is the intersection of $\Gamma^{geom}(f)$ with a line $H$ (\textit{supporting line of} $\tau$) and such that one of the two half-spaces defined by $H$  contains $\Gamma^{geom}(f)$. Note that $\Gamma^{geom}(f)$ is a face itself. The dimension of $\tau$ is the dimension of the subspace spanned by $\tau$. The zero dimensional faces are called \textit{vertices} and the one dimensional faces are called \textit{edges}. For every  face $\tau \subseteq \Gamma^{geom}(f)$ the \textit{face function} is the polynomial $$f_\tau (x,y)=\sum_{(i,j) \in \tau} a_{i,j} x^iy^j.$$

A non constant polynomial $f(x,y)$ satisfying $f(0,0) = 0$ is called \textit{non degenerate with respect to}  $\Gamma^{geom} (f)$ (in the sense of Kouchnirenko \cite{Ko}) if:
\begin{enumerate}[i)]
	\item the origin of $L_v^2$ is a singular point of $f(x,y)$;
	\item for every $\tau \subseteq \Gamma^{geom}(f)$, there are no solutions $(x,y) \in (L_v^\times)^2$ to the system $$f_\tau (x,y)=\frac{\partial f_\tau}{\partial x}(x,y)=\frac{\partial f_\tau}{\partial y}(x,y)=0.$$
\end{enumerate}
Now, we recall the construction of a polyhedral subdivision of $\mathbb{R}_+^2$ subordinate to $\Gamma^{geom}(f)$. Given $a \in \mathbb{R}_+^2$ we set $$m(a):=\inf\limits_{x \in \Gamma^{geom}(f)} \langle a,x \rangle.$$ We also define $F(a)=\{x \in \Gamma^{geom} (f) | \langle a,x \rangle =m(a)\}$ as the \textit{first meet locus} of $a$. Note that $F(a)$ is a face of $\Gamma^{geom}(f)$. In particular, $F(0)=\Gamma^{geom}(f).$ We define an equivalence relation on $\mathbb{R}_+^2$ by taking $$a \sim a'\ \textit{ if and only if} \ F(a) =F(a').$$
The equivalence classes of $\sim$ are the sets $$\Delta_\tau:=\{ a \in (\mathbb{R}_+)^2\mid F(a)=\tau\},$$ with $\tau\subseteq \Gamma^{geom}(f)$. The following Proposition gives a precise description of these equivalent classes.
\begin{lemma}[\protect{\cite{SaZu}*{Proposition 2.1}}]
	Let $\tau$ be a proper face of $\Gamma^{geom}(f)$.   If $\tau$ is an edge of $\Gamma^{geom}(f)$, with normal vector $a$,  then
	$$\Delta_\tau=\{\lambda a \mid \lambda  \in \mathbb{R},\ \lambda> 0\}.$$
	If $\tau$  is a vertex of $\Gamma^{geom}(f)$ contained in the edges $\gamma_1$ and $\gamma_2$, and if $a_1,a_2$ are the normal vectors to $\gamma_1,\gamma_2$ respectively, then 
	$$\Delta_\tau=\{\lambda a_1+\lambda a_2 \mid \lambda_1,\lambda_2  \in \mathbb{R}, \text{with} \ \lambda_1,\lambda_2> 0\}.$$ 
\end{lemma}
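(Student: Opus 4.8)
The plan is to identify $\Delta_\tau$ with the relative interior of the inner normal cone of the face $\tau$ in the normal fan of the polyhedron $\Gamma:=\Gamma^{geom}(f)$, and then to read off the two asserted descriptions by an elementary supporting-line computation, treating edges and vertices separately. First I would record two standing facts. Since $\Gamma$ is the convex hull of finitely many translates of $\mathbb{R}_+^2$, it is a polyhedron with finitely many faces whose recession cone is exactly $\mathbb{R}_+^2$; hence for every $a\in\mathbb{R}_+^2$ one has $\langle a,x\rangle\geq 0$ on $\Gamma$, and because $\langle a,\cdot\rangle$ is nonnegative on the recession cone the infimum $m(a)$ is finite and attained, so $F(a)$ is a genuine nonempty face. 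Moreover $F(0)=\Gamma$, so any $a$ with $F(a)=\tau$ a \emph{proper} face is nonzero.

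For the edge case, let $\tau$ be an edge and let $H=\{x:\langle a,x\rangle=m(a)\}$ be its supporting line, with $a$ the inner normal, normalized so that $\Gamma$ lies in the closed half-plane it bounds; such $a$ is unique up to a positive scalar, lies in $\mathbb{R}_+^2\setminus\{0\}$ (for the two unbounded edges of $\Gamma$ it is proportional to $(0,1)$ or $(1,0)$), and satisfies $H\cap\Gamma=\tau$ because an edge is a maximal proper face of the two-dimensional polyhedron $\Gamma$. The inclusion $\{\lambda a:\lambda>0\}\subseteq\Delta_\tau$ is then immediate, since rescaling $a$ by $\lambda>0$ does not change the set of minimizers, so $F(\lambda a)=H\cap\Gamma=\tau$. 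Conversely, if $F(a')=\tau$ then $\langle a',\cdot\rangle$ is constant along $\tau$, hence $a'$ is orthogonal to the direction of the edge, i.e. $a'=\lambda a$ for some $\lambda\in\mathbb{R}$; since $a'\in\mathbb{R}_+^2\setminus\{0\}$ and $a$ has nonnegative coordinates not both zero, $\lambda a\in\mathbb{R}_+^2\setminus\{0\}$ forces $\lambda>0$. This settles the edge case and, as a by-product, records that the inner normal $a_i$ of an edge $\gamma_i$ satisfies $\langle a_i,x\rangle\geq\langle a_i,y\rangle$ for all $x\in\Gamma$, $y\in\gamma_i$, with equality exactly when $x\in\gamma_i$.

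For the vertex case, let $\tau=\{v\}$ lie on the edges $\gamma_1,\gamma_2$ with inner normals $a_1,a_2$, and fix vectors $d_1,d_2$ pointing from $v$ into $\gamma_1,\gamma_2$ respectively. From the previous paragraph $\langle a_i,d_i\rangle=0$, while $\langle a_j,d_i\rangle>0$ for $j\neq i$ (for small $t>0$ the point $v+td_i$ lies in $\gamma_i\setminus\{v\}$, hence not in $\gamma_j$ since $\gamma_1\cap\gamma_2=\{v\}$, so it is not a minimizer of $\langle a_j,\cdot\rangle$); in particular $a_1,a_2$ are linearly independent and form a basis of $\mathbb{R}^2$. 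For $\supseteq$, take $a=\lambda_1 a_1+\lambda_2 a_2$ with $\lambda_1,\lambda_2>0$; the corresponding positive combination of the two supporting inequalities for $\gamma_1,\gamma_2$ gives $\langle a,x\rangle\geq\langle a,v\rangle$ on $\Gamma$, with equality iff $x\in\gamma_1\cap\gamma_2=\{v\}$, so $F(a)=\{v\}=\tau$, and $a\in\mathbb{R}_+^2$ since $a_1,a_2\in\mathbb{R}_+^2$ and $\lambda_i>0$. For $\subseteq$, let $a'\in\Delta_\tau$; since $v$ is the \emph{unique} minimizer of $\langle a',\cdot\rangle$ on $\Gamma$ and $v+td_i\in\Gamma\setminus\{v\}$ for small $t>0$, we get $\langle a',d_i\rangle>0$ for $i=1,2$. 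Writing $a'=\lambda_1 a_1+\lambda_2 a_2$ in the basis $\{a_1,a_2\}$ and using $\langle a_1,d_1\rangle=\langle a_2,d_2\rangle=0$, the identities $\langle a',d_1\rangle=\lambda_2\langle a_2,d_1\rangle$ and $\langle a',d_2\rangle=\lambda_1\langle a_1,d_2\rangle$ together with $\langle a_2,d_1\rangle,\langle a_1,d_2\rangle>0$ force $\lambda_1,\lambda_2>0$, which is the claim.

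Finally, a remark on what needs care rather than on what is hard. The statement is the standard description of the cones of a normal fan, so there is no genuine obstacle, but one must be attentive to two points. The first is that the content is precisely that $\Delta_\tau$ is the \emph{relative interior} of the normal cone and not its closure; this is exactly where the strict inequalities $\lambda>0$ and $\lambda_1,\lambda_2>0$ enter, and omitting the sign analysis would wrongly glue adjacent cones. The second is that $\Gamma$ carries two unbounded edges (the horizontal and vertical rays, with inner normals proportional to $(0,1)$ and $(1,0)$), so a vertex may lie on an unbounded edge, and the degenerate shapes of $\Gamma$ (one bounded edge, or none at all when $f$ is a monomial) must be admitted; the arguments above are insensitive to this, since an unbounded edge still has a well-defined inner normal and an emanating direction vector $d_i$, and every inequality used survives verbatim.
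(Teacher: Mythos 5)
Your argument is correct and complete. Note that the paper itself gives no proof of this statement: it is quoted verbatim from \cite{SaZu}*{Proposition 2.1}, so there is no in-paper argument to compare against; your supporting-line/normal-fan computation is the standard one and is essentially what the cited reference does, with the right attention paid to the two points that genuinely matter here (strict positivity of the coefficients, i.e.\ $\Delta_\tau$ being the relative interior of the normal cone rather than its closure, and the unbounded edges with normals $(1,0)$, $(0,1)$ coming from the recession cone $\mathbb{R}_+^2$). You also implicitly corrected the typo in the statement as printed, where $\{\lambda a_1+\lambda a_2 \mid \lambda_1,\lambda_2>0\}$ should of course read $\{\lambda_1 a_1+\lambda_2 a_2 \mid \lambda_1,\lambda_2>0\}$.
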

Sets like $\Delta_\tau$ are called \textit{strictly positive cones} and one says that they are spanned by $a$ or $a_1,a_2$. When the set of generators is linearly independent over $\mathbb{R}$ one says that the cone is $\textit{simplicial}$. If the generators are in  $\mathbb{Z}^2$ then we call $\Delta_\tau$  a \textit{rational simplicial cone}, and when the set of generators is a subset of a basis of the $\mathbb{Z}-$ module $\mathbb{Z}^2$, we say that $\Delta_\tau$ is a \textit{simple cone}. 

A vector of $\mathbb{R}_+^2$ is called \textit{primitive} if their entries are integers which are relatively prime. For every edge of $\Gamma^{geom} (f) $, there exist a unique primitive vector in $\mathbb{N}^2 \setminus \{0\}$ perpendicular to it. Therefore, the equivalence classes of $\sim$ are rational simplicial cones spanned by the primitive vectors orthogonal to the edges of $\Gamma^{geom} (f) $.

From the above con\-si\-de\-ra\-tions one has that there exists a partition of $\mathbb{R}_+^2$  of the form
\begin{equation}\label{Subdivision}
\mathbb{R}_+^2=\{(0,0)\} \cup \bigcup\limits_{\tau \subset \Gamma^{geom}(f) } \Delta_\tau,
\end{equation}
 where $\tau$ runs through all proper faces of $\Gamma^{geom}(f)$.  In this case one says that 
  $\{\Delta_\tau\}_{\tau \subset \Gamma^{geom}(f)}$
 is a \textit{simplicial conical subdivision} of $\mathbb{R}_+^2$ \textit{subordinated to} $\Gamma^{geom}(f)$.
\subsection{Local zeta functions and conical subdivisions}
Once we have a simplicial conical subdivision subordinated to $\Gamma^{geom}(f)$, it is possible to reduce the computation of $Z(s,f,\chi)$
 to integrals over the cones $\Delta_\tau$. In order to do that let $f(x,y) \in L_v[x,y]$ be a non-constant polynomial satisfying $f(0,0)=0$, and let $\Gamma^{geom}(f)$  be its geometric Newton polygon.  We fix a simplicial conical subdivision $\{\Delta_\tau\}_{\tau \subset \Gamma^{geom}(f)}$ of $\mathbb{R}^2_+$ subordinated to $\Gamma^{geom}(f)$, and set
\begin{gather*}
E_{\Delta_\tau}:=\{(x,y) \in O_v^2 \mid (v(x),v(y)) \in \Delta_\tau\},\\
Z(s,f,\chi, \Delta_\tau):= \int_{E_{\Delta_\tau}}\chi(ac \ f(x,y))\ |f(x,y)|^s \  |dxdy|,
\end{gather*}
for a proper face $\tau$, and 
\[Z(s,f,\chi, O_v^{\times 2}):= \int_{O_{v}^{\times 2} }\chi(ac \ f(x,y))\ |f(x,y)|^s \ |dxdy|.\]
Therefore 
\begin{equation}\label{ZetaCones}
Z(s,f, \chi)=Z(s,f, \chi, O_{v}^{\times 2}) + \sum_{\tau \subset \Gamma^{geom}(f)} Z(s,f, \chi, \Delta_\tau).
\end{equation}
The integrals appearing in (\ref{ZetaCones}) can be computed explicitly when $f$ is assumed to be non-degenerate with respect to  $\Gamma^{geom}(f)$ by using techniques of toroidal geometry or the $\pi-$adic stationary
phase formula, see e.g. \cites{DeHoo,Var,ZNag}. For the sake of completeness we recall here the stationary phase formula. We recall that the conductor $c_{\chi}$  of a character $\chi$ of $O_v^{\times}$ is defined as the smallest $c \in \mathbb{N}\setminus\{0\}$ such that $\chi$ is trivial on $1+\pi^cO_v$.

Denote by $\bar{x}$ the reduction mod $\pi$ of $x\in O_v$, we denote by $\overline{f}(x)$ the reduction of the coefficients of $f(x)\in O_v[x]$ (we assume that not all of the coefficients of $f$ are in $\pi O_v$). We fix a set of representatives  $\mathcal{L}$ of $\mathbb{F}_q$ in $O_v$, that is, $\mathcal{L}\times\mathcal{L}$ is mapped bijectively onto $\mathbb{F}_q^2$ by the canonical homomorphism $O_v^{2}\to\left(O_v/\pi O_v\right)^2\simeq\mathbb{F}_q^2$. Now take $\overline{T}\subseteq\mathbb{F}_q^2$ and denote by $T$ its preimage under the aforementioned homomorphism, we denote by $S_T(f)$ the subset of $\mathcal{L}\times\mathcal{L}$ mapped bijectively to the set of singular points of $\overline{f}$ in $\overline{T}$. We define also
\[\nu_T(\bar{f},\chi):=\begin{cases}
q^{-2} \text{Card}\{\overline{t} \in \overline{T} \mid \bar{f}(\overline{t})\neq 0\} & \textit{ if } \chi=\chi_{triv}\\
\\
q^{-2c_{\chi}}\sum\limits_{\{t \in T \mid \bar{f}(\bar{t})\neq 0\} \mod P_v^{c_\chi}} \chi(ac \ (f(t))),& \textit{ if }  \chi \neq \chi_{triv},
\end{cases}\] 
and
\[\sigma_T(\bar{f},\chi):=\begin{cases}
q^{-2} \text{Card}\{\overline{t} \in \overline{T} \mid \bar{t}  \text{ is a non singular root of } \bar{f}\} & \textit{ if } \chi=\chi_{triv}\\
0 & \textit{ if }  \chi \neq \chi_{triv}.
\end{cases}\] 
Denote by $Z_T(s,f,\chi)$ the integral $\int\limits_T \chi(ac \ f(x,y))\ |f(x,y)|^s\ |dxdy|$. 
\begin{lemma}[\protect{\cite{ZNag}*{Igusa's Stationary Phase Formula}}]\label{SPF}
 With all the notation above we have
\begin{gather*}
Z_T(s,f,\chi)=\nu_T(\overline{f},\chi)+\sigma_T(\overline{f},\chi)\frac{(1-q^{-1})q^{-s}}{(1-q^{-1-s})}\\
+ \int\limits_{S_T(f)}\chi(ac\ f(x,y))\ |f(x,y)|^s\ |dx dy|,
\end{gather*}
where $Re(s) > 0$.
\end{lemma}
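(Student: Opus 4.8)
The plan is to split the domain $T$ into residue discs modulo $\pi$ and to analyze each of them according to the behaviour of $\overline{f}$ at the corresponding point of $\mathbb{F}_q^2$. Concretely, write $T=\bigsqcup_{\overline{t}\in\overline{T}}(t+(\pi O_v)^2)$, where $t\in\mathcal{L}\times\mathcal{L}$ denotes the representative of $\overline{t}$. Each $\overline{t}\in\overline{T}$ belongs to exactly one of three classes: (i) $\overline{f}(\overline{t})\neq 0$; (ii) $\overline{f}(\overline{t})=0$ but $\nabla\overline{f}(\overline{t})\neq(0,0)$, i.e. $\overline{t}$ is a non-singular root of $\overline{f}$; (iii) $\overline{t}$ is a singular point of $\overline{f}$. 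The union of the discs of type (iii) is the set over which the last integral in the statement is taken, so its contribution to $Z_T(s,f,\chi)$ is precisely that integral, and it remains to compute the contributions of the discs of types (i) and (ii) and to check they add up to $\nu_T(\overline{f},\chi)$ and $\sigma_T(\overline{f},\chi)\,\frac{(1-q^{-1})q^{-s}}{1-q^{-1-s}}$ respectively.

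On a disc $t+(\pi O_v)^2$ of type (i) one has $f(x,y)\in O_v^{\times}$, hence $|f(x,y)|=1$ throughout. If $\chi=\chi_{triv}$ the integral over the disc is its measure $q^{-2}$, and summing over all type-(i) discs gives $\nu_T(\overline{f},\chi_{triv})$. If $\chi\neq\chi_{triv}$, refine the disc into sub-discs modulo $\pi^{c_\chi}$; on each sub-disc $t'+(\pi^{c_\chi}O_v)^2$ a Taylor expansion of $f$ about $t'$ (using $f\in O_v[x,y]$) shows $f(x,y)-f(t')\in\pi^{c_\chi}O_v$, so $f(x,y)f(t')^{-1}\in 1+\pi^{c_\chi}O_v$ and therefore $\chi(ac\,f(x,y))=\chi(ac\,f(t'))$ is constant there by the definition of the conductor $c_\chi$. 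Integrating and summing over all such sub-discs reproduces exactly $q^{-2c_\chi}\sum\chi(ac\,f(t'))=\nu_T(\overline{f},\chi)$.

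On a disc $t+(\pi O_v)^2$ of type (ii), assume without loss of generality $\partial_y\overline{f}(\overline{t})\neq 0$ (otherwise interchange the variables). Then $\partial_y f$ is a unit on the whole disc, so the map $(x,y)\mapsto(x,f(x,y))$ has unit Jacobian and, by the non-Archimedean inverse/implicit function theorem (equivalently, a Hensel-type argument), restricts to a measure-preserving bijection of $t+(\pi O_v)^2$ onto $(t_1+\pi O_v)\times\pi O_v$; the second factor is $\pi O_v$ because $\overline{f}(\overline{t})=0$ forces $f(x,t_2)\in\pi O_v$ for every $x\in t_1+\pi O_v$. Thus the integral over the disc equals $q^{-1}\int_{\pi O_v}\chi(ac\,z)\,|z|^s\,|dz|$. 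Writing $\pi O_v=\bigsqcup_{n\geq 1}\pi^nO_v^{\times}$ and using that $\int_{O_v^{\times}}\chi(w)\,|dw|$ equals $1-q^{-1}$ for $\chi=\chi_{triv}$ and $0$ otherwise (orthogonality of the characters of $O_v^{\times}$), we obtain $\int_{\pi O_v}\chi(ac\,z)|z|^s|dz|=\frac{(1-q^{-1})q^{-1-s}}{1-q^{-1-s}}$ if $\chi=\chi_{triv}$ and $0$ if $\chi\neq\chi_{triv}$. Since $q^{-1}\cdot\frac{(1-q^{-1})q^{-1-s}}{1-q^{-1-s}}=q^{-2}\cdot\frac{(1-q^{-1})q^{-s}}{1-q^{-1-s}}$, summing over the type-(ii) discs yields $\sigma_T(\overline{f},\chi)\,\frac{(1-q^{-1})q^{-s}}{1-q^{-1-s}}$ in both cases. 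Adding the contributions of the three types gives the asserted formula.

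The step needing the most care is the type-(ii) reduction: one must justify that $(x,y)\mapsto(x,f(x,y))$ is a measure-preserving bijection of the residue disc onto $(t_1+\pi O_v)\times\pi O_v$, which amounts to controlling simultaneously the injectivity, the surjectivity and the Jacobian of $y\mapsto f(x,y)$ on $t_2+\pi O_v$ via Hensel's lemma. The remaining ingredients — the geometric series for $\int_{\pi O_v}\chi(ac\,z)|z|^s|dz|$ and the vanishing of $\int_{O_v^{\times}}\chi$ for non-trivial $\chi$ — are routine.
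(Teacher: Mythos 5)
Your argument is correct: the decomposition of $T$ into residue discs according to whether $\overline{f}(\overline{t})$ is nonzero, a non-singular zero, or a singular zero, the refinement modulo $\pi^{c_\chi}$ in the first case, and the measure-preserving change of variables $(x,y)\mapsto(x,f(x,y))$ justified by Hensel's lemma in the second, is precisely the standard proof of Igusa's stationary phase formula. The paper itself gives no proof of this lemma, quoting it from \cite{ZNag}, and your reconstruction matches the argument found there (and in Igusa's book), so there is nothing to add beyond noting that the integral over $S_T(f)$ is to be read, as you do, as the integral over the union of the residue discs around the singular points of $\overline{f}$ in $\overline{T}$.
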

\begin{lemma}[\protect{\cite{IBook}*{Lemma 8.2.1}}]\label{LemmaIg}
	Take $a \in O_v$, $\chi$ a character of $O_v^\times$, $e \in \mathbb{N}$ and $n,N \in  \mathbb{N} \setminus \{0\}.$ Then
	\begin{gather*}
	\int\limits_{a+\pi^e O_v}\chi(ac(x))^N|x|^{sN+n-1}\ dx\\
	=\begin{cases}
	\frac{(1-q^{-1})(q^{-en-eNs})}{(1-q^{-n-Ns})} & \text{ if }  a \in \pi^eO_v, \chi^N=\chi_{triv}\\
	\\
	q^{-e}\chi(ac(a))^N|a|^{sN+n-1} & \text{ if }  a \notin \pi^eO_v, \chi^N|_{1+\pi^ea^{-1}O_v}=\chi_{triv}\\
	\\
	0 & \textit{all other cases}.
	\end{cases}\end{gather*}
\end{lemma}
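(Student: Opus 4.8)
Since the statement is a direct computation in $p$-adic integration, the plan is to split into the two mutually exclusive cases $a\in\pi^eO_v$ and $a\notin\pi^eO_v$, and in each of them to break the domain of integration into pieces on which the integrand is a constant times a multiplicative character of $O_v^\times$. The computation then reduces to integrating a character over a compact group, which contributes the Haar measure of the group when the character is trivial on it and $0$ otherwise, by orthogonality of characters. Throughout I write $\chi^N$ for the character $u\mapsto\chi(u)^N$ of $O_v^\times$.

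First I would treat the case $a\in\pi^eO_v$, where $a+\pi^eO_v=\pi^eO_v$. Partitioning by the valuation, $\pi^eO_v=\bigsqcup_{k\geqslant e}\pi^kO_v^\times$, and on $\pi^kO_v^\times$ one has $|x|=q^{-k}$. The substitution $x=\pi^ku$ with $u\in O_v^\times$ scales $dx$ by $q^{-k}$ and satisfies $ac(x)=u$, so the $k$-th piece equals $q^{-k(sN+n)}\int_{O_v^\times}\chi^N(u)\,du$; here $\int_{O_v^\times}\chi^N\,du$ is $1-q^{-1}$ if $\chi^N=\chi_{triv}$ and $0$ otherwise. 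Summing the geometric series $\sum_{k\geqslant e}q^{-k(sN+n)}=q^{-e(sN+n)}/(1-q^{-(sN+n)})$ yields the first line of the formula when $\chi^N=\chi_{triv}$, and $0$ --- which falls under ``all other cases'' --- when $\chi^N\neq\chi_{triv}$.

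Next I would treat the case $a\notin\pi^eO_v$; set $m:=v(a)$, so that $m<e$. For every $x\in a+\pi^eO_v$ one has $v(x)=m$, hence $|x|=|a|$ is constant on the domain, and writing $x=a+\pi^et$ with $t\in O_v$ gives $x=a(1+\pi^ea^{-1}t)$ with $1+\pi^ea^{-1}t\in1+\pi^{e-m}O_v$, since $\pi^ea^{-1}O_v=\pi^{e-m}O_v$; consequently $ac(x)=ac(a)(1+\pi^ea^{-1}t)$. Pulling out the constants and performing the substitution $x=a+\pi^et$ (which scales $dx$ by $q^{-e}$) followed by $w=1+\pi^ea^{-1}t$ (which scales the measure by $q^{e-m}$), the integral becomes $q^{-e}q^{e-m}|a|^{sN+n-1}\chi(ac(a))^N\int_{1+\pi^{e-m}O_v}\chi^N(w)\,dw$. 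Since $1+\pi^{e-m}O_v$ is a compact subgroup of $O_v^\times$ on which the restriction of the additive Haar measure is multiplication invariant, $\int_{1+\pi^{e-m}O_v}\chi^N\,dw$ equals $q^{-(e-m)}$ if $\chi^N$ is trivial on $1+\pi^ea^{-1}O_v$ and $0$ otherwise; the factor $q^{e-m}$ cancels this $q^{-(e-m)}$, leaving $q^{-e}\chi(ac(a))^N|a|^{sN+n-1}$ in the first case and $0$ in the second, which reproduces the last two lines. I do not expect a genuine obstacle here; the only points that require care are the bookkeeping of the measure-scaling factors under the substitutions $x=\pi^ku$ and $t\mapsto\pi^ea^{-1}t$, the identity $\pi^ea^{-1}O_v=\pi^{e-v(a)}O_v$, and the two appeals to character orthogonality over the compact groups $O_v^\times$ and $1+\pi^{e-v(a)}O_v$.
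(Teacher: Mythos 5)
Your computation is correct and complete: both cases are handled properly, the measure-scaling factors under $x=\pi^k u$ and $t\mapsto \pi^e a^{-1}t$ are right, the identity $ac(x)=ac(a)(1+\pi^e a^{-1}t)$ and the containment $1+\pi^e a^{-1}O_v\subseteq 1+\pi O_v$ are used correctly, and the two appeals to orthogonality of characters on the compact groups $O_v^\times$ and $1+\pi^{e-v(a)}O_v$ yield exactly the three lines of the formula (with convergence of the geometric series for $Re(s)>0$, as implicitly assumed). The paper itself gives no proof of this lemma, quoting it as Lemma 8.2.1 of Igusa's book; your argument is essentially the standard one found there, namely decomposition by valuation in the case $a\in\pi^e O_v$ and reduction to a character integral over $1+\pi^e a^{-1}O_v$ in the case $a\notin\pi^e O_v$.
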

The next Lemma is an easy consequence of Lemma \ref{LemmaIg} and will be used frequently along the article.
\begin{lemma}\label{LemmaInt}
	Take  $h(x,y) \in O_v[x,y]$, then  
	$$\sum_{(\overline{x}_0,\overline{y}_0) \in ({\mathbb{F}_q}^{\times})^2}\int\limits_{O_v}\chi(ac \ (h(x_0,y_0)+\pi z))\ |h(x_0,y_0)+\pi z|^s\ |dz|$$     equals  
	\[\begin{cases}
	\frac{q^{-s}(1-q^{-1})N}{(1-q^{-1-s})}+(q-1)^2-N  &  \textit{if } \ \chi =\chi_{triv}\\
	\\
	\sum\limits_{\substack{(\overline{x}_0,\overline{y}_0) \in ({\mathbb{F}_q}^{\times})^ 2\\
			\overline{h}(\overline{x}_0,\overline{y}_0)\neq 0}}\chi(ac (h(x_0,y_0))) & \textit{if } \  \chi\neq\chi_{triv} \text{ and }\chi|_{U}=\chi_{triv}\\
	\\
	0  &    \textit{all other cases},
	\end{cases}\]
	where  $N=\text{Card}\{(\overline{x}_0,\overline{y}_0) \in ({\mathbb{F}_q}^{\times})^2\mid\overline{h}(\overline{x}_0,\overline{y}_0)=0\}$, and $U=1+\pi O_v$.
\end{lemma}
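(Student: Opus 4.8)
The plan is to fix, for every $(\overline{x}_0,\overline{y}_0)\in(\mathbb{F}_q^\times)^2$, a representative $(x_0,y_0)\in\mathcal{L}\times\mathcal{L}$, and to split the outer sum according to whether $\overline{h}(\overline{x}_0,\overline{y}_0)\neq 0$ or $\overline{h}(\overline{x}_0,\overline{y}_0)=0$. In each of the two cases I would perform a simple affine change of variables in $z$ that turns the inner integral over $O_v$ into one of the shape handled by Lemma \ref{LemmaIg} with $n=N=1$, read off the closed form, and then sum over the $(q-1)^2$ pairs. The number of pairs of each type is $(q-1)^2-N$, respectively $N$, so the three cases of the statement drop out by addition. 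I do not expect a genuine obstacle here; the only points that need care are choosing the right parameters $a,e$ when invoking Lemma \ref{LemmaIg}, keeping track of the Jacobian factors of the substitutions, and checking that, when $\chi\neq\chi_{triv}$, the contributions do not depend on the chosen representatives.

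For a pair with $\overline{h}(\overline{x}_0,\overline{y}_0)\neq 0$, set $a:=h(x_0,y_0)\in O_v^\times$. The substitution $x=a+\pi z$ has Jacobian $|\pi|=q^{-1}$, so the inner integral equals $q\int_{a+\pi O_v}\chi(ac\,x)\,|x|^s\,|dx|$. Since $a\notin\pi O_v$ and $a^{-1}O_v=O_v$, Lemma \ref{LemmaIg} with $e=1$, $n=N=1$ gives the value $q\cdot q^{-1}\chi(ac\,a)|a|^s=\chi(a)$ when $\chi|_{U}=\chi_{triv}$ (in particular $=1$ when $\chi=\chi_{triv}$) and $0$ otherwise. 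As $a$ is a unit, $\chi(a)=\chi(ac\,h(x_0,y_0))$; moreover, when $\chi|_U=\chi_{triv}$, replacing $(x_0,y_0)$ by another representative only moves $h(x_0,y_0)$ inside the coset $h(x_0,y_0)\,U$, on which $\chi\circ ac$ is constant, so this contribution is well defined.

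For a pair with $\overline{h}(\overline{x}_0,\overline{y}_0)=0$, write $h(x_0,y_0)=\pi a'$ with $a'\in O_v$. Then $h(x_0,y_0)+\pi z=\pi(a'+z)$, and from $ac(\pi)=1$ one gets $ac\,(\pi(a'+z))=ac\,(a'+z)$ together with $|\pi(a'+z)|^s=q^{-s}|a'+z|^s$; the translation $w=a'+z$ preserves $|dz|$, so the inner integral is $q^{-s}\int_{O_v}\chi(ac\,w)\,|w|^s\,|dw|$. Lemma \ref{LemmaIg} with $a=0$, $e=0$, $n=N=1$ evaluates $\int_{O_v}\chi(ac\,w)\,|w|^s\,|dw|$ to $\tfrac{1-q^{-1}}{1-q^{-1-s}}$ if $\chi=\chi_{triv}$ and to $0$ if $\chi\neq\chi_{triv}$; hence this contribution is $\tfrac{q^{-s}(1-q^{-1})}{1-q^{-1-s}}$ for the trivial character and $0$ otherwise.

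It then remains to add up. When $\chi=\chi_{triv}$ the sum equals $\big((q-1)^2-N\big)\cdot 1+N\cdot\tfrac{q^{-s}(1-q^{-1})}{1-q^{-1-s}}$, which is the first case. When $\chi\neq\chi_{triv}$ and $\chi|_U=\chi_{triv}$, the pairs with $\overline{h}(\overline{x}_0,\overline{y}_0)=0$ contribute nothing and the remaining pairs contribute $\sum_{\overline{h}(\overline{x}_0,\overline{y}_0)\neq 0}\chi(ac\,h(x_0,y_0))$, which is the second case. In all other cases (that is, $\chi\neq\chi_{triv}$ with $\chi|_U\neq\chi_{triv}$) every pair contributes $0$, giving the third case, and the proof is complete.
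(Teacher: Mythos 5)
Your proof is correct and follows exactly the route the paper intends: the paper gives no separate argument, stating only that the lemma is an easy consequence of Lemma \ref{LemmaIg}, and your case split on $\overline{h}(\overline{x}_0,\overline{y}_0)\neq 0$ versus $=0$, with the affine substitutions and the application of Lemma \ref{LemmaIg} (noting $1+\pi a^{-1}O_v=U$ for a unit $a$), supplies precisely that computation. The bookkeeping of Jacobians, the independence of representatives when $\chi|_U=\chi_{triv}$, and the final count $(q-1)^2-N$ versus $N$ are all handled correctly.
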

\section{Arithmetic Newton Polygons and Non Degeneracy Conditions.}\label{Arit N and ND cond}
\subsection{Semi--quasihomogeneous polynomials}
Let $ L$  be a field, and $a, b$ two coprime positive integers. A polynomial $f(x,y) \in L[x,y]$  is called quasihomogeneous  with respect to the  weight $(a,b)$ if it has the form $f(x,y)=cx^uy^v \prod_{i=1}^{l}(y^a-\alpha_ix^b)^{e_i}, c \in L^\times$. Note that such a polynomial satisfies $f(t^ax,t^by)=t^d f(x,y)$, for  every $t \in L^\times$, and thus this definition of quasihomogeneity coincides with the standard one after a finite extension of $L$. The integer $d$ is called the weighted degree of $f(x,y)$ with respect to $(a,b)$.

A polynomial $f(x,y)$ is called \textit{semi--quasihomogeneous} with respect to the weight $(a,b)$  when 
\begin{equation}\label{Weig}
f(x,y)=\sum_{j=0}^{l_f} f_j(x,y),
\end{equation}
and the $f_j(x,y)$ are quasihomogeneous polynomials of degree $d_j$ with respect to $(a,b)$, and $d_0 <d_1< \cdots <d_{l_f}$. The polynomial $f_0(x,y)$ is called the \emph{quasihomogeneous tangent cone}  of $f(x,y)$. 

We set
	\begin{gather*}
	f_j(x,y):=c_jx^{u_j}y^{v_j}\prod\limits_{i=1}^{l_j}(y^a-\alpha_{i,j}x^b)^{e_{i,j}}, \ \ c_j \in L^{\times}.
	\end{gather*}
We assume that $d_j$ is the weighted degree of $f_j(x,y)$ with respect to $(a,b)$, thus $d_j:=ab\left(\sum_{i=1}^{l_j} e_{i,j}\right)+a u_j+bv_j$.

Now, let  $f(x,y) \in L[x,y]$ be a semi-quasihomogeneous polynomial of the form (\ref{Weig}), and take $\theta \in L^{\times}$ a fixed root of $f_0(1,y^a)$. We put $e_{j,\theta}$ for the multiplicity of $\theta$ as a root of $f_j(1,y^a)$.  To each $f_j(x,y)$ we associate a straight line of the form
$$w_{j,\theta}(z):=(d_j-d_0) +e_{j,\theta} z, \ \ j=0,1,\cdots, l_f,$$ where $z$ is a real variable. 
\begin{definition}
	\begin{enumerate}
		\item The arithmetic Newton polygon $\Gamma_{f,\theta}$ of $f(x,y)$ at $\theta$ is
		\[\Gamma_{f,\theta}=\{(z,w) \in \mathbb{R}_+^2 \mid w \leqslant \min_{0 \leqslant j \leqslant l_f}\{w_{j,\theta}(z)\}\}.\]
		\item The \textit{arithmetic Newton polygon} $\Gamma^A(f)$ of $f(x,y)$ is defined as the family 
		\[\Gamma^A(f)=\{\Gamma_{f,\theta} \mid \theta \in L^\times, \ f_0(1,\theta^a)=0\}.\]
	\end{enumerate}
\end{definition}
If $\mathcal{Q}=(0,0)$ or if $\mathcal{Q}$ is a point of the topological boundary of $\Gamma_{f, \theta}$ which is the intersection point of at least two different straight lines $w_{j,\theta}(z)$, then we say that $\mathcal{Q}$ is a \textit{vertex} of $\Gamma^A(f)$.  The boundary of $\Gamma_{f,\theta}$ is formed by $r$ straight segments, a half--line, and the non--negative part of the horizontal axis of the $(w,z)-$plane. Let $\mathcal{Q}_k, k=0,1,\cdots,r$ denote the vertices of the topological boundary of $\Gamma_{f,\theta}$, with $\mathcal{Q}_0:=(0,0)$. Then the equation of the straight segment between $\mathcal{Q}_{k-1}$ and $\mathcal{Q}_k$ is
\begin{equation}\label{wline}
	w_{k,\theta}(z)=(\mathcal{D}_k-d_0)+\varepsilon_{k}z,\quad k=1,2,\cdots,r. 
\end{equation}
The equation of the half--line starting at $\mathcal{Q}_r$ is,
\begin{equation}\label{wliner}
	w_{r+1,\theta}(z)=(\mathcal{D}_{r+1}-d_0)+\varepsilon_{r+1}z.
\end{equation}
Therefore
\begin{equation}\label{vertex}
	\mathcal{Q}_{k}=(\tau_k,(\mathcal{D}_{k}-d_0)+\varepsilon_{k}\tau_k),\quad k=1,2,\cdots r, 
\end{equation}
where $\tau_{k}:=\frac{(\mathcal{D}_{k+1}-\mathcal{D}_k)}{\varepsilon_{k}-\varepsilon_{k+1}} > 0, \quad k=1,2,\cdots r.$ Note that $\mathcal{D}_k=d_{j_k}$ and $\varepsilon_k=e_{j_k,\theta}$, for some index $j_k\in\{1,\ldots,l_j\}$. In particular, $\mathcal{D}_1=d_0,$ $\varepsilon_1=e_{0,\theta}$, and the first equation is $w_{1,\theta}(z)=\varepsilon_1 z$. If $\mathcal{Q}$ is a vertex of the boundary of $\Gamma_{f,\theta}$, the \textit{face function} is the polynomial 
\begin{equation}\label{face function}
f_{\mathcal{Q}}(x,y):=\sum\limits_{w_{j,\theta}(\mathcal{Q})=0}f_j(x,y),
\end{equation}
where $w_{j,\theta} (z)$ is the straight line corresponding to $f_j(x,y)$.
\begin{definition}
	\begin{enumerate}
		\item A semi--quasihomogeneous polynomial $f(x,y) \in L[x,y]$ is called arithmetically non-degenerate modulo $\pi$ with respect to $\Gamma_{f,\theta}$ at $\theta$, if the following conditions holds.
		\begin{enumerate}
			\item The origin of $\mathbb{F}_q^2$ is a singular point of $\overline{f}$, i.e. $\overline{f}(0,0)=\nabla\overline{f}(0,0)=0$;
			\item  $\overline{f}(x,y)$ does not have singular points on $(\mathbb{F}_q^\times)^2$;
			\item  for any vertex $\mathcal{Q}\neq\mathcal{Q}_0$  of the boundary of $\Gamma_{f,\theta}$, the system of equations 
			$$\overline{f}_\mathcal{Q}(x,y)=\frac{\partial \overline{f}_\mathcal{Q}}{\partial x}(x,y)=\frac{\partial \overline{f}_\mathcal{Q}}{\partial y}(x,y)=0,$$
			has no solutions on $(\mathbb{F}_q^\times)^2$.
		\end{enumerate}
		\item If a semi--quasihomogeneous polynomial   $f(x,y) \in L[x,y]$ is arithmetically non-degenerate with respect to $\Gamma_{f,\theta}$, for each $\theta \in L^\times$ satisfying $f_0(1,y^a)=0$, then $f(x,y)$ is called arithmetically non-degenerate with respect to $\Gamma^A(f)$.
	\end{enumerate}
\end{definition}

\subsection{Arithmetically non degenerate polynomials}
Let $a_{\gamma}=(a_1(\gamma),a_2(\gamma))$ be the normal vector of a fixed edge $\gamma$ of $\Gamma^{geom}(f)$. It is well known that $f(x,y)$ is a semi--quasihomogeneous polynomial with respect to the weight $a_{\gamma}$, in this case we write
\[f(x,y)=\sum_{j=0}^{l_f} f_j^\gamma(x,y),\]	
where $f_j^\gamma(x,y)$ are quasihomogeneous polynomials of degree $d_{j,\gamma}$ with respect to $a_{\gamma}$, cf. \eqref{Weig}. We define 
\[\Gamma_\gamma^A(f)=\{\Gamma_{f,\theta} \mid \theta \in L^\times, \ f_0^\gamma(1,\theta^{a_1(\gamma)})=0\},\]
i.e. this is the arithmetic Newton polygon of $f(x,y)$ regarded as a semi quasihomogeneous polynomial with respect to the weight $a_\gamma$. Then we define 
\[\Gamma^{A}(f)=\bigcup\limits_{\gamma\text{ edge of }\Gamma^{geom}(f)}\Gamma_\gamma^A(f).
\]
\begin{definition}
	$f(x,y) \in L[x,y]$ is called arithmetically non-degenerate modulo $\pi$ with respect to its arithmetic Newton polygon, if for every edge $\gamma$ of $\Gamma^{geom}(f),$ the semi--quasihomogeneous polynomial $f(x,y),$ with respect to the weight $a_\gamma$, is arithmetically non-degenerate modulo $\pi$  with respect to $\Gamma_\gamma^A(f).$
\end{definition}

\section{The local zeta function of  ${(y^3-x^2)^2+x^4y^4}$}\label{Example1} 
We present an example to illustrate the geometric ideas presented in the previous sections. We assume that the characteristic of the residue field of $L_v$ is different from 2. Note that the origin of $L_v^2$ is the only singular point of $f(x,y)=(y^3-x^2)^2+x^4y^4$, and this polynomial is degenerate with respect to $\Gamma^{geom}(f)$. Now, the conical subdivision of $\mathbb{R}_+^2$ subordinated to the geometric Newton polygon of $f(x,y)$ is $\mathbb{R}_+^2=\{(0,0)\} \cup \bigcup_{j=1}^9 \Delta_j$,  where the $\Delta_j$ are in Table \ref{table}.
	\begin{table}[hbt]
		\begin{tabular}{|c|l||c|l|}
			\hline 
			Cone & Generators & Cone & Generators\\
			\hline\hline
			$\Delta_1$ & $(0,1)\mathbb{R}_+\setminus\{0\}$ & $\Delta_6$ & $(3,2)\mathbb{R}_+\setminus\{0\} + (2,1)\mathbb{R}_+\setminus\{0\}$\\
			\hline 
			$\Delta_2$ & $(0,1)\mathbb{R}_+\setminus\{0\} + (1,1)\mathbb{R}_+\setminus\{0\}$ & $\Delta_7$ & $(2,1)\mathbb{R}_+\setminus\{0\}$\\
			\hline 
			$\Delta_3$ & $(1,1)\mathbb{R}_+\setminus\{0\}$ & $\Delta_8$ & $(2,1)\mathbb{R}_+\setminus\{0\} + (1,0)\mathbb{R}_+\setminus\{0\}$\\
			\hline 
			$\Delta_4$ & $(1,1)\mathbb{R}_+\setminus\{0\} + (3,2)\mathbb{R}_+\setminus\{0\}$ &$\Delta_9$ & $(1,0)\mathbb{R}_+\setminus\{0\}$\\
			\hline 
			$\Delta_5$ & $(3,2)\mathbb{R}_+\setminus\{0\}$ & &\\
			\hline 
		\end{tabular}
	\caption{Conical subdivision of $\mathbb{R}_+^2\setminus\{(0,0)\}$.}\label{table}
	\end{table}
\subsection{Computation of $\mathbf{Z(s,f,\chi,\Delta_i),  i=1,2,3,4,6,7,8,9}$} These integrals co\-rres\-pond to the case in which $f$ is non--degenerate on $\Delta_i$. The integral corresponding to $\Delta_3$, can be calculated as follows.
\begin{gather*}
	Z(s,f,\chi,\Delta_3)= \sum_{n=1}^{\infty}\int_{\pi^{n}O_v^\times \times \pi^{n}O_v^\times}\chi(ac \ f(x,y))|f(x,y)|^s |dxdy|=\\
	\sum_{n=1}^{\infty} q^{-2n-4ns}\int_{O_v^{\times 2}} \chi(ac \ (\pi^{n}y^3-x^2)^2+\pi^{4n}x^4y^4) |(\pi^{n}y^3-x^2)^2+\pi^{4n}x^4y^4|^s|dxdy|.
\end{gather*}
We set  $g_3(x,y)=(\pi^{n}y^3-x^2)^2+\pi^{4n}x^4y^4$, then $\overline{g}_3(x,y)=x^4$ and the origin is the only singular point of $\overline{g}_3$.  We decompose $O_v^{\times^ 2}$  as
\[O_v^{\times^ 2} =\bigsqcup_{(\overline{a},\overline{b}) \in (\mathbb{F}_q^{\times})^2}(a,b)+(\pi O_v)^2,\]
thus
\begin{gather*}
	Z(s,f,\chi,\Delta_3)=  \sum_{n=1}^{\infty} q^{-2n-4ns}\sum_{(\overline{a},\overline{b}) \in (\mathbb{F}_q^{\times})^2}\int_{(a,b)+(\pi O_v)^2}\chi(ac \ g_3(x,y))|g_3(x,y)|^s |dxdy|\\\notag
	=\!\sum_{n=1}^{\infty}\!q^{-2n-4ns-2}\!\sum_{(\overline{a},\overline{b})\in (\mathbb{F}_q^{\times})^2}\int_{O_v^{2}}\chi(ac \ g_3(a+\pi x,b+\pi y)) |g_3(a+\pi x,b+\pi y)|^s |dxdy|.
\end{gather*}
Now, by using the Taylor series for $g$ around $(a,b)$:
\[g(a+\pi x,b+\pi y)=g(a,b)+\pi \left(\frac{\partial g}{\partial x}(a,b)x+\frac{\partial g}{\partial y}(a,b) y\right)+\pi^2(\textit{higher order terms)},\]
and the fact that $\frac{\partial{\overline{g_3}}}{\partial x}(\overline{a},\overline{b})=4\overline{a}^3 \not\equiv 0 \mod \pi$, we can change variables in the previous integral as follows
\begin{equation}\label{changeofv}
	\begin{cases}
		z_1=\frac{g_3(a+\pi x,b+\pi y)-g_3(a,b)}{\pi}&\\
		z_2=y.&
	\end{cases}
\end{equation}
This transformation gives a bianalytic mapping on $O_v^{2}$ that preserves the Haar measure. Hence by Lemma \ref{LemmaInt}, we get
\begin{gather*}
	Z(s,f,\chi,\Delta_3)\\
	= \sum_{n=1}^{\infty} q^{-2n-4ns-2}\sum_{(\overline{a},\overline{b}) \in (\mathbb{F}_q^{\times})^2}\int\limits_{O_v}\chi(ac \ (g_3(a,b)+\pi z_1)) |g_3(a,b)+\pi z_1)|^s\ |dz_1|,\\
= \begin{cases}\frac{q^{-2-4s}(1-q^{-1})^2}{(1-q^{-2-4s})}  & \textit{if }  \  \chi=\chi_{triv}\\
\frac{q^{-2-4s}(1-q^{-1})^2}{(1-q^{-2-4s})}  & \textit{if } \chi^4=\chi_{triv} \text{ and } \chi|_{U} =\chi_{triv}\\
0   & \textit{all other cases},
\end{cases}
\end{gather*}
where   $U=1+\pi O_v$.

We note here that for $i=1,2,4,6,7,8$ and $9$, the computation of the $Z(s,f,\chi, \Delta_i)$ are similar to the case $Z(s,f,\chi, \Delta_3)$.
\subsection{Computation of $\mathbf{Z(s,f,\chi,\Delta_5)}$ (An integral on a degenerate face in the sense of Kouchnirenko)}
\begin{gather}\label{Int5}
	Z(s,f,\chi,\Delta_5)= \sum_{n=1}^{\infty}\int\limits_{\pi^{3n}O_v^{\times} \times \pi^{2n}O_v^{\times}}\chi(ac \ f(x,y))\ |f(x,y)|^s|dxdy|\\\notag
	=\sum_{n=1}^{\infty}q^{-5n-12ns}\int\limits_{O_v^{\times 2}}\chi(ac ((y^3-x^2)^2+\pi^{8n}x^4y^4)) |(y^3-x^2)^2+\pi^{8n}x^4y^4|^s\ |dxdy|.
\end{gather}
Let $f^{(n)}(x,y)=(y^3-x^2)^2+\pi^{8n}x^4y^4$, for $n \geqslant 1$. We define 
\begin{equation}\label{bije}
\begin{array}{lll}\Phi:&O_v^{\times 2}  & \longrightarrow O_v^{\times 2}\\
&(x,y)  & \longmapsto (x^3y,x^2y).
\end{array}
\end{equation}
$\Phi$ is an analytic bijection of $O_v^{\times 2}$ onto itself that preserves the Haar measure, so it can be used as a change of variables in \eqref{Int5}. We have $(f^{(n)} \circ \Phi) (x,y)=x^{12}y^4 \widetilde{f^{(n)}}(x,y),$ with $\widetilde{f^{(n)}}(x,y)=(y-1)^2+\pi^{8n}x^8y^4,$
and  then
\begin{gather*}
I(s,f^{(n)},\chi):=\int\limits_{O_v^{\times 2}}\chi(ac ((y^3-x^2)^2+\pi^{8n}x^4y^4))\ |(y^3-x^2)^2+\pi^{8n}x^4y^4|^s\ |dxdy|\\
=\int\limits_{O_v^{\times 2}}\chi(ac (x^{12}y^4\widetilde{f^{(n)}}(x,y)))\ |\widetilde{f^{(n)}}(x,y)|^s\ |dxdy|.
\end{gather*}
Now, we decompose $O_v^{\times 2}$ as follows:
\[ 	O_v^{\times 2}=\left(\bigsqcup_{y_0 \not\equiv 1\bmod{\pi}} O_v^{\times} \times \{y_0+\pi O_v\}\right) \bigcup O_v^{\times} \times \{1+\pi O_v\},\]
where $y_0$ runs through a set of representatives of $\mathbb{F}_q^{\times}$ in $O_v$.  By using this decomposition, 
\begin{gather*}
	I(s,f^{(n)},\chi)=\\
	\sum_{y_0 \not\equiv 1\bmod{\pi}} \sum_{j=0}^{\infty}q^{-1-j}\int\limits_{O_v^{\times 2}}
	\chi( ac ( x^{12}[y_0+\pi^{j+1} y]^4\widetilde{f^{(n)}}(x,y_0+\pi^{j+1} y)))\ |dxdy|\\
	+ \sum_{j=0}^{\infty}q^{-1-j}\int\limits_{O_v^{\times 2}}\mathcal{X}(x^{12}[1+\pi^{j+1} y]^4\widetilde{f^{(n)}}(x,1+\pi^{j+1} y))\ |dxdy|,
\end{gather*}
where $\mathcal{X}(x^{12}[1+\pi^{j+1} y]^4\widetilde{f^{(n)}}(x,1+\pi^{j+1} y))=\chi(x^{12}[1+\pi^{j+1} y]^4\widetilde{f^{(n)}}(x,1+\pi^{j+1} y))\times|x^{12}[1+\pi^{j+1} y]^4\widetilde{f^{(n)}}(x,1+\pi^{j+1} y)|^s$. Finally,
\begin{gather*}
	I(s,f^{(n)},\chi)=\sum_{y_0 \not\equiv 1\bmod{\pi}} \sum_{j=0}^{\infty}q^{-1-j}\int\limits_{O_v^{\times 2}}\chi(ac (f_1(x,y)))\ |dxdy|\\+ \sum_{j=0}^{4n-2}q^{-1-j-(2+2j)s}\int\limits_{O_v^{\times 2}}\chi(ac (f_2(x,y)))\ |dxdy| \\
	+ q^{-4n-8ns}\int\limits_{O_v^{\times 2}}\chi(f_3(x,y))\ |f_3(x,y)|^s\ |dxdy|\\+\sum_{j=4n}^{\infty}q^{-j-1-8ns}\int\limits_{(O_v^{\times})^2}\chi(ac (f_4(x,y)))\ |dxdy|,
\end{gather*}
where
\begin{align*}
f_1(x,y)& =x^{12}(y_0+\pi^{j+1} y)^4((y_0-1+\pi^{j+1} y)^2+ \pi^{8n}x^8(y_0+\pi^{j+1}y)^4),\\
f_2(x,y)&=x^{12}(1+\pi^{j+1}y)^4(y^2+\pi^{8n-(2+2j)}x^8(1+\pi^{j+1} y)^4),\\
f_3(x,y)&=x^{12}(1+\pi^{j+1}y)^4(y^2+x^8(1+\pi^{j+1} y)^4),\\
\intertext{and}
f_4(x,y)&=x^{12}(1+\pi^{j+1}y)^4(\pi^{2+2j-8n}y^2+x^8(1+\pi^{j+1} y)^4). 
\end{align*}
We note that each $\overline{f}_i, (i=1,2,3,4)$, does not have singular points on $(\mathbb{F}_q^\times)^2$, so we may use the change of variables (\ref{changeofv}) and proceed in a similar manner as in the computation of $Z(s,f,\chi,\Delta_3)$. 

We want to call the attention of the reader to the fact that the definition of the $f_i$'s above depends on the value of $|(\pi^{j+1} y)^2+\pi^{8n}x^8(1+\pi^{j+1} y)^4|$, which in turn depends on the explicit description of the set $\{(w,z)\in \mathbb{R}^2\mid w\leq\min\{2z,8n\}\}$. The later set can be described explicitly by using the arithmetic Newton polygon of $f(x,y)=(y^3-x^2)^2+x^4y^4$.

Summarizing, when $\chi=\chi_{triv}$,
\begin{equation}\label{Exam 1}
\begin{split}
Z(s,f,\chi_{triv})=2q^{-1}(1-q^{-1})+\frac{q^{-2-4s}(1-q^{-1})}{(1-q^{-2-4s})} 
+\frac{q^{-7-16s}(1-q^{-1})^2}{(1-q^{-2-4s})(1-q^{-5-12s})}\\
+\frac{q^{-8-18s}(1-q^{-1})^2}{(1-q^{-3-6s})(1-q^{-5-12s})}+\frac{q^{-3-6s}(1-q^{-1})}{(1-q^{-3-6s})}
+\frac{(1-q^{-1})^2q^{-6-14s}}{(1-q^{-1-2s})(1-q^{-5-12s})}\\
-\frac{(1-q^{-1})^2q^{-9-20s}}{(1-q^{-1-2s})(1-q^{-9-20s})}+\frac{(q-2)(1-q^{-1})q^{-6-12s}}{(1-q^{-5-12s})}
+\frac{(1-q^{-1})(q^{-10-20s})}{(1-q^{-9-20s})}+\\\frac{q^{-9-20s}}{(1-q^{-1-s})(1-q^{-9-20s})}
\left\{q^{-1}(q^{-1-s}-q^{-1})N+(1-q^{-1})^2(1-q^{-1-s})\right.\\
\left.-q^{-2}(1-q^{-1-s})T\right\},
\end{split}
\end{equation}
where $N=(q-1)\text{Card}\{x \in \mathbb{F}_q^\times\mid x^2= -1\}$ 
and $T=\text{Card}\{(x,y) \in (\mathbb{F}_q^{\times})^2\mid y^2+x^8= 0\}.$

When $\chi\neq\chi_{triv}$ and $ \chi|_{1+\pi O_v}=\chi_{triv}$ we have several cases: if  $\chi^2=\chi_{triv}$, we have
\begin{equation}\label{Exam 2}
Z(s,f,\chi)=\frac{(1-q^{-1})^2q^{-6-14s}}{(1-q^{-1-2s})(1-q^{-5-12s})}-\frac{(1-q^{-1})^2q^{-9-20s}}{(1-q^{-1-2s})(1-q^{-9-20s})}.
\end{equation}
When $\chi^4=\chi_{triv}$,
\begin{equation}\label{Exam 3}
\begin{split}
Z(s,f,\chi)=q^{-1}(1-q^{-1})+\frac{q^{-3-4s}(1-q^{-1})}{(1-q^{-2-4s})}+\frac{q^{-2-4s}(1-q^{-1})^2}{(1-q^{-2-4s})} \\
+\frac{q^{-7-16s}(1-q^{-1})^2}{(1-q^{-2-4s})(1-q^{-5-12s})}.
\end{split}
\end{equation}
In the case where $\chi^6=\chi_{triv}$, we obtain
\begin{equation}\label{Exam 4}
\begin{split}
Z(s,f,\chi)=\frac{q^{-8-18s}(1-q^{-1})^2}{(1-q^{-3-6s})(1-q^{-5-12s})}+\frac{q^{-3-6s}(1-q^{-1})^2}{(1-q^{-3-6s})}\\
+\frac{q^{-4-6s}(1-q^{-1})}{(1-q^{-3-6s})}+q^{-1}(1-q^{-1}).
\end{split}
\end{equation}
If $\chi^{12}=\chi_{triv}$, then
\begin{equation}\label{Exam 5}
Z(s,f,\chi)=\overline{\chi}^4(\overline{y}_0)\overline{\chi}^2(\overline{y}_0-1) \frac{(q-2)(1-q^{-1})q^{-6-12s}}{(1-q^{-5-12s})},
\end{equation}
where $\bar{\chi}$ is the multiplicative character induced by $\chi$ in $\mathbb{F}_q^\times$. Finally for $\chi^{20}=\chi_{triv}$ 
\begin{equation}\label{Exam 6}
Z(s,f,\chi)=\frac{(1-q^{-1})(q^{-10-20s})}{(1-q^{-9-20s})}.
\end{equation}
In all other cases $Z(s,f,\chi)=0$.

\section{Integrals Over Degenerate Cones}
From the example in Section \ref{Example1}, we may deduce that when one deals with an integral of type $Z(s,f,\chi,\Delta)$ over a degenerate cone, we have to use an analytic bijection $\Phi$ over the units as  a change of variables and then, split the integration domain according with the roots of the tangent cone of $f$.  In each one of the sets of the splitting, calculations can be done by using the arithmetical non--degeneracy condition and/or the stationary phase formula.  The purpose of this section is to show how this procedure works. 
\subsection{Some reductions on the integral $Z(s,f,\chi,\Delta)$}
We recall the definitions of Section \ref{Arit N and ND cond}, let $f(x,y) \in O_v[x,y]$  be a semiquasihomogeneous polynomial, with res\-pect to the weight $(a,b)$, with $a, b$ coprime, and $f^{(m)}(x,y):=\pi^{-d_0 m}f(\pi^{am}x,\pi^{bm}y)$$=\sum_{j=0}^{l_f}\pi^{(d_j-d_0)m}f_j(x,y),$
where $m \geqslant 1$, and
\begin{equation}\label{quasihom}
f_j(x,y)=c_jx^{u_j}y^{v_j}\prod_{i=1}^{l_j}(y^a-\alpha_{i,j}x^b)^{e_{i,j}}, c_j \in L_{v}^{\times}.
\end{equation}
By Proposition 5.1 in \cite{SaZu}, there exists a measure--preserving bijection 
\[\begin{array}{lll}\Phi:&O_v^{\times 2}  & \longrightarrow O_v^{\times 2}\\
&(x,y)  & \longmapsto (\Phi_1(x,y), \Phi_2(x,y)),
\end{array}\]
such that $F^{(m)}(x,y):=f^{(m)} \circ \Phi (x,y)=x^{N_i}y^{M_i}\widetilde{f^{(m)}}(x,y)$, with $\widetilde{f^{(m)}}(x,y)=\sum_{j=0}^{l_f}\pi^{(d_j-d_0)m}\widetilde{f_j}(x,y),$
where one can assume that $\widetilde{f_j}(x,y)$ is a polynomial of the form 
\begin{equation}\label{f tilde}
\widetilde{f_j}(u,w)=c_ju^{A_j}w^{B_j}\prod_{i=1}^{l_j}(w-\alpha_{i,j})^{e_{i,j}}.
\end{equation}
After using $\Phi$ as a change of variables in $Z(s,f,\chi,\Delta)$, one has to deal with integrals of type:
\begin{equation*}
I(s,F^{(m)},\chi):= \int\limits_{O_v^{\times 2}}\chi(ac \ (F^{(m)}(x,y)))\ |F^{(m)}(x,y)|^s\ |dxdy|.
\end{equation*}
We set	$R(f_0):= \{\theta \in O_v \mid f_0(1,\theta^a)=0\}$, and $l(f_0):= \max\limits_{\substack{
		\theta \neq \theta' \\
		\theta, \theta'  \in R(f_0)}}  \  \{v(\theta - \theta')\}$. 
\begin{proposition}[\protect{\cite{SaZu}*{Proposition 5.2}}] \label{Prop5.2}
	\[  I(s,F^{(m)},\chi)=\frac{U_0(q^{-s}, \chi)}{1-q^{-1-s}}
	+ \sum_{\theta \in R(f_0)}J_\theta(s,m,\chi),\]
	where $U_0(q^{-s}, \chi)$ is a polynomial with rational coefficients and 
	\[J_\theta(s,m,\chi):= \sum_{k=1+l(f_0)}^{\infty}q^{-k}\int\limits_{O_v^{\times 2}}\chi(ac (F^{(m)}(x,\theta+\pi^{k} y)))\ |F^{(m)}(x,\theta+\pi^{k} y)|^s  \ |dxdy|.\]
\end{proposition}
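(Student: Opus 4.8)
The plan is to split the integration domain $O_v^{\times 2}$ according to the reduction mod $\pi$ of the $y$-coordinate, isolating those residues that come from roots of the quasihomogeneous tangent cone. First I would write $O_v^\times = \bigsqcup_{\bar{y}_0 \in \mathbb{F}_q^\times} (y_0 + \pi O_v)$ for a fixed set of representatives, and correspondingly decompose
\[
I(s,F^{(m)},\chi) = \sum_{\bar{y}_0 \in \mathbb{F}_q^\times} \int\limits_{O_v^\times \times (y_0+\pi O_v)} \chi(ac\ F^{(m)}(x,y))\ |F^{(m)}(x,y)|^s\ |dxdy|.
\]
Recalling that $F^{(m)}(x,y) = x^{N_i}y^{M_i}\widetilde{f^{(m)}}(x,y)$ with $\widetilde{f^{(m)}}(x,y) = \sum_j \pi^{(d_j-d_0)m}\widetilde{f_j}(x,y)$ and $\widetilde{f_j}$ of the product form \eqref{f tilde}, the reduction $\overline{\widetilde{f^{(m)}}}(x,y)$ equals (up to the monomial factor) $\overline{\widetilde{f_0}}(x,y) = \bar{c}_0 x^{A_0}w^{B_0}\prod_i(w-\bar\alpha_{i,0})^{e_{i,0}}$, whose only zeros on $(\mathbb{F}_q^\times)^2$ occur when $\bar{y}_0 = \bar\alpha_{i,0}$ for some $i$, i.e. exactly when $\bar{y}_0$ is the reduction of some $\theta \in R(f_0)$.

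For the residues $\bar{y}_0$ that are not roots of $\overline{\widetilde{f_0}}$, the function $\overline{\widetilde{f^{(m)}}}(x, y_0)$ is a nonzero monomial in $x$ on $O_v^\times$, so $|F^{(m)}|$ and $ac\ F^{(m)}$ are locally constant in a controlled way; these contributions can be evaluated directly with Lemma \ref{LemmaIg} (or the stationary phase formula, Lemma \ref{SPF}) and assembled into the rational function $\tfrac{U_0(q^{-s},\chi)}{1-q^{-1-s}}$ — the denominator $1-q^{-1-s}$ being the one produced by the geometric series over the powers of $\pi$ dividing the monomial part, and $U_0$ a polynomial in $q^{-s}$ with rational coefficients. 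The subtlety here is that several distinct residues $\bar{y}_0$ may reduce to the same root $\theta$ modulo higher powers of $\pi$; this is precisely why one must not stop at depth one but push the expansion around each $\theta$ down to depth $1 + l(f_0)$, where $l(f_0) = \max_{\theta\neq\theta'} v(\theta-\theta')$ guarantees the balls $\theta + \pi^{1+l(f_0)}O_v$ separate the distinct roots while still containing all the relevant non-monomial behaviour. The leftover pieces from these nested shells — finitely many annuli $\theta + \pi^k O_v^\times$ for $1 \le k \le l(f_0)$ around each $\theta$, together with the innermost ball — again reduce to monomial-in-$x$ integrals handled by Lemma \ref{LemmaIg}, and get absorbed into $U_0$.

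What remains after peeling off everything up to depth $1+l(f_0)$ around each root is, for each $\theta \in R(f_0)$, exactly the tail
\[
J_\theta(s,m,\chi) = \sum_{k=1+l(f_0)}^{\infty} q^{-k} \int\limits_{O_v^{\times 2}} \chi(ac\ F^{(m)}(x,\theta+\pi^k y))\ |F^{(m)}(x,\theta+\pi^k y)|^s\ |dxdy|,
\]
the factor $q^{-k}$ coming from the change of variables $y \mapsto \theta + \pi^k y$ on the ball $\theta + \pi^k O_v$, of measure $q^{-k}$. Adding the monomial contributions and the root tails gives the claimed identity. The main obstacle is the bookkeeping in the second step: one must verify carefully that the cutoff at $k = 1 + l(f_0)$ is simultaneously large enough to separate all roots of $f_0$ (so that no ball $\theta + \pi^k O_v$ with $k \ge 1+l(f_0)$ meets a second root) and that all the finitely many intermediate shells genuinely reduce to the monomial case so that their zeta integrals are polynomials in $q^{-s}$ over $\mathbb{Q}$ times $(1-q^{-1-s})^{-1}$; this is where the structure \eqref{f tilde} of $\widetilde{f_j}$ and the coprimality of the weight $(a,b)$ (ensuring $\Phi$ really is a measure-preserving bijection of $O_v^{\times 2}$) are used essentially. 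I would quote Proposition 5.1 of \cite{SaZu} for the existence and properties of $\Phi$ and otherwise reduce everything to repeated application of Lemmas \ref{LemmaIg} and \ref{LemmaInt}.
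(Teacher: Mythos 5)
You should first note that the paper does not prove this proposition at all: it is quoted verbatim from \cite{SaZu}*{Proposition 5.2}, so the only comparison available is with the proof given there, whose strategy your outline essentially reproduces. The part of your sketch concerning the balls around the roots is correct: the balls $\theta+\pi^{1+l(f_0)}O_v$, $\theta\in R(f_0)$, are pairwise disjoint precisely because of the definition of $l(f_0)$, and writing each punctured ball as the union of the shells $v(y-\theta)=k$, $k\ge 1+l(f_0)$, with the change of variables $y\mapsto\theta+\pi^k y$ and measure factor $q^{-k}$, produces exactly the series $J_\theta(s,m,\chi)$.

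The genuine gap is in your treatment of the complement of these balls, i.e.\ in the claim that everything else is a polynomial over $1-q^{-1-s}$. First, your explanation of where that denominator comes from is wrong: on $O_v^{\times 2}$ the monomial factor $x^{N_i}y^{M_i}$ is a unit, so there is no ``geometric series over the powers of $\pi$ dividing the monomial part''; on a residue class where $\overline{\widetilde{f_0}}$ does not vanish one simply has $|F^{(m)}(x,y)|=1$ and the contribution is an $s$-independent constant. The factor $1-q^{-1-s}$ is produced by the $\sigma$-term of the stationary phase formula (Lemma \ref{SPF}), equivalently the first case of Lemma \ref{LemmaIg}, applied on the intermediate regions. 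Second, those intermediate regions — the shells $1\le v(y-\theta)\le l(f_0)$ around the roots, together with the residue classes lying over zeros of $\overline{\widetilde{f_0}}$ that do \emph{not} lift to elements of $R(f_0)$ (clusters of conjugate roots outside $O_v$; your assertion that zeros of the reduction occur ``exactly when $\bar y_0$ reduces from $R(f_0)$'' needs Hensel-type lifting and fails for multiple roots) — do not ``reduce to the monomial case''. For fixed finite $m$ the terms $\pi^{(d_j-d_0)m}\widetilde{f_j}$ can cancel against $\widetilde{f_0}$ there, so the restriction of $\widetilde{f^{(m)}}$ may still vanish, possibly with singular reduction, and Lemma \ref{LemmaIg} is not directly applicable. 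Handling exactly these regions is the substance of the proof in \cite{SaZu}: one iterates the stationary phase formula finitely many times, the iteration terminating because of the cutoff $k\ge 1+l(f_0)$ around genuine roots and the factored form \eqref{f tilde}, and it is this finite iteration that guarantees both that $U_0$ is a polynomial and that only the factor $1-q^{-1-s}$ appears. You flag this bookkeeping as ``the main obstacle'' but the resolution you propose (monomial integrals plus Lemma \ref{LemmaIg}) does not cover it, so the central claim of the proposition is asserted rather than proved.
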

In order to compute the integral $J_\theta(s,m,\chi)$, we introduce here some notation. For a polynomial $h(x,y)\in O_v[x,y]$ we define $N_h=\text{Card}\{(\overline{x}_0,\overline{y}_0) \in ({\mathbb{F}_q}^{\times})^2\mid\overline{h}(\overline{x}_0,\overline{y}_0)=0\}$, and put 
\[M_h=\frac{q^{-s}(1-q^{-1})N_h}{1-q^{-1-s}}+(q-1)^2-N_h\quad\text{and}\quad\Sigma_h:=\sum\limits_{\substack{(\overline{a},\overline{b}) \in ({\mathbb{F}_q}^\times)^2\\
		\overline{h}(\overline{a},\overline{b})\neq 0}}\chi(ac\ (h(a,b))).\]

\begin{proposition}\label{Prop5.3}
	We fix $\theta\in R(f_0)$ and assume that $f(x,y)$ is arithmetically non degenerate with respect to $\Gamma_{f,\theta}$. Let $\tau_i, i=0,1,2,\cdots ,r$ be the abscissas of the vertices of $\Gamma_{f,\alpha_{i,0}}$, cf. \eqref{f tilde}. 
	\begin{enumerate}
		\item $J_\theta(s,m,\chi_{triv})$ is equal to
		\begin{gather*}
		\sum\limits_{i=0}^{r-1}q^{-(D_{i+1}-d_0)ms}\left(\frac{q^{-(1+s\varepsilon_{i+1})([m\tau_i]+1)}-q^{-(1+s\varepsilon_{i+1})([m\tau_{i+1}]-1)}}{1-q^{-(1+s\varepsilon_{i+1})}}\right)M_g\\
		+q^{-(D_{r+1}-d_0)ms}\left(\frac{q^{-(1+s\varepsilon_{r+1})[m\tau_r]}}{1-q^{-(1+s\varepsilon_{r+1})}}\right)M_{g_r}
		+\sum\limits_{i=1}^{r}q^{-(D_{i}-d_0)ms-(s \varepsilon_{i}[m\tau_i])}M_{G},
		\end{gather*}
		with \begin{gather*}
		g(x,y)=\gamma_{i+1}(x,y)y^{e_{i+1,\theta}}+\pi^{m(D_{i+1}-D_i)}(\text{higher order terms}),\\
		g_r(x,y)=\gamma_{r+1}(x,y)y^{e_{r+1,\theta}}+\pi^{m(D_{r+1}-D_i)}(\text{higher order terms}),\\
		\intertext{and}
		G(x,y)=\sum_{\widetilde{w}_{i,\theta}(\mathcal{V}_i)=0} \gamma_i(x,y)y^{e_{i,\theta}},
		\end{gather*}
		where $\widetilde{w}_{i,\theta}(\widetilde{z})$ is the straight line corresponding to the term
		\[ \pi^{(d_j-d_0)m+ke_{j,\theta}}\gamma_j(x,y)y^{e_{j,\theta}},\] cf. \eqref{face function}.
		\item In the case $\chi|_{1+\pi O_v}=\chi_{triv}$, $J_\theta(s,m,\chi)$ is equal to
		\begin{gather*}
		\sum\limits_{i=0}^{r-1}q^{-(D_{i+1}-d_0)ms}\left(\frac{q^{-(1+s\varepsilon_{i+1})([m\tau_i]+1)}-q^{-(1+s\varepsilon_{i+1})([m\tau_{i+1}]-1)}}{1-q^{-(1+s\varepsilon_{i+1})}}\right)\Sigma_g\\
		+q^{-(D_{r+1}-d_0)ms}\left(\frac{q^{-(1+s\varepsilon_{r+1})[m\tau_r]}}{1-q^{-(1+s\varepsilon_{r+1})}}\right)\Sigma_{g_r}
		+\sum\limits_{i=1}^{r}q^{-(D_{i}-d_0)ms-(s \varepsilon_{i}[m\tau_i])}.
		\end{gather*}
		\item In all other cases $J_\theta(s,m,\chi)=0$.
	\end{enumerate}
\end{proposition}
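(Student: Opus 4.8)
The plan is to perform, inside the definition of $J_\theta(s,m,\chi)$, the substitution $y\mapsto\theta+\pi^{k}y$ that already appears in Proposition~\ref{Prop5.2}, to read off the $\pi$--adic order of $F^{(m)}(x,\theta+\pi^{k}y)$ on $O_v^{\times2}$ as a function of $k$ from the arithmetic Newton polygon $\Gamma_{f,\theta}$, and then to evaluate the resulting one--parameter family of integrals by the residue--disc/change--of--variables argument used in Section~\ref{Example1}, which by non--degeneracy reduces each of them to the elementary integral computed in Lemma~\ref{LemmaInt} (and, over a vertex, to Igusa's stationary phase formula, Lemma~\ref{SPF}).

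First I would insert $F^{(m)}(x,y)=x^{N_i}y^{M_i}\widetilde{f^{(m)}}(x,y)$ with $\widetilde{f^{(m)}}=\sum_{j}\pi^{(d_j-d_0)m}\widetilde{f_j}$ and $\widetilde{f_j}$ as in \eqref{f tilde}. On $O_v^{\times2}$ the monomial $x^{N_i}(\theta+\pi^{k}y)^{M_i}$ is a unit, hence leaves $|\cdot|^{s}$ unchanged and enters $\chi(ac(\cdot))$ only through the residues of $x$ and $y$; so everything is governed by $\widetilde{f^{(m)}}(x,\theta+\pi^{k}y)$. Separating from each $\widetilde{f_j}$ the linear factors $w-\alpha_{i,j}$ with $\alpha_{i,j}=\theta$ — which contribute $(\pi^{k}y)^{e_{j,\theta}}$ up to a unit as soon as $k>l(f_0)$ — one obtains
\[
\widetilde{f^{(m)}}(x,\theta+\pi^{k}y)=\sum_{j=0}^{l_f}\pi^{(d_j-d_0)m+ke_{j,\theta}}\,\gamma_j(x,y)\,y^{e_{j,\theta}}+(\text{higher order in }\pi),
\]
where the exponent of the $j$--th leading term equals $m\,w_{j,\theta}(k/m)$. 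Thus, away from the zeros modulo $\pi$ of the relevant leading coefficient, $v\bigl(\widetilde{f^{(m)}}(x,\theta+\pi^{k}y)\bigr)$ is $m$ times the ordinate of the boundary of $\Gamma_{f,\theta}$ at abscissa $k/m$, and the term or terms that realize it depend only on whether $k/m$ lies in the interior of a segment of this boundary or over one of its vertices.

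Accordingly I would break $\sum_{k\ge1+l(f_0)}q^{-k}(\cdots)$ into three types of ranges: $[m\tau_i]+1\le k\le[m\tau_{i+1}]-1$ for $i=0,\dots,r-1$ (interior of the $(i+1)$-th segment, minimal exponent $(D_{i+1}-d_0)m+\varepsilon_{i+1}k$, realized by $w_{i+1,\theta}$ alone); $k=[m\tau_i]$ for $i=1,\dots,r$ (over the vertex $\mathcal Q_i$, where every line through $\mathcal Q_i$ realizes the minimum); and $k\ge[m\tau_r]$ (the half--line, governed by $w_{r+1,\theta}$). For a given $k$, extracting the power $\pi^{E}$ of the minimal exponent from $|\cdot|^{s}$ and using $ac(\pi^{a}h)=ac(h)$, the inner integral becomes $q^{-Es}\int_{O_v^{\times2}}\chi(ac(x^{N_i}(\theta+\pi^{k}y)^{M_i}h(x,y)))\,|h(x,y)|^{s}\,|dxdy|$, where $\overline{h}$ is a unit times a power of $y$ on an interior or half--line range and equals $\overline{G}$, the reduction of the face function of \eqref{face function}, over a vertex. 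Decomposing $O_v^{\times2}$ into cosets modulo $(\pi O_v)^{2}$ and applying on each the change of variables of Section~\ref{Example1} — legitimate on every coset because $\overline{h}$ has no singular point on $(\mathbb F_q^\times)^2$ (automatic in the interior/half--line case; precisely condition~(c) of arithmetic non--degeneracy over a vertex) — reduces the integral to the sum evaluated in Lemma~\ref{LemmaInt}, so that the whole inner integral contributes $M_g$, $M_{g_r}$, $M_G$ when $\chi=\chi_{triv}$, the quantities displayed in part~(2) when $\chi|_{1+\pi O_v}=\chi_{triv}$, and $0$ in every other case. Summing the geometric progressions in $k$ over each range and collecting terms then produces the three stated formulas.

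The main obstacle will be the combinatorics behind the two middle paragraphs: proving that for every $k$ exactly the asserted term or terms of $\widetilde{f^{(m)}}(x,\theta+\pi^{k}y)$ attain the minimal valuation; that over a vertex the surviving reduction is exactly $\overline{G}$, so that condition~(c) of arithmetic non--degeneracy is precisely what is invoked; that the endpoint conventions $[m\tau_i]\pm1$ and $[m\tau_r]$ remain consistent whether or not $m\tau_i\in\mathbb Z$; and that the residual character sums over $(\mathbb F_q^\times)^2$ collapse to the exact coefficients asserted — in particular to the bare exponentials in the vertex terms of part~(2) and to $0$ throughout case~(3), the latter because $\sum_{t\in\mathbb F_q^\times}\chi(ac(t))=0$ whenever $\chi$ induces a non--trivial character on $\mathbb F_q^\times$. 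Once these valuation computations are in place, the rest is a routine application of Lemmas~\ref{SPF}, \ref{LemmaIg} and~\ref{LemmaInt} together with summation of geometric series.
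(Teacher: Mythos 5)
Your proposal follows essentially the same route as the paper's proof: substitute $y\mapsto\theta+\pi^k y$, read the valuation of $F^{(m)}(x,\theta+\pi^k y)$ off the rescaled arithmetic Newton polygon $\Gamma_{F^{(m)}(x,\theta+\pi^k y)}$, split the sum over $k$ according to whether $k$ lies strictly between, beyond, or at the abscissas $m\tau_i$ of its vertices, then on each residue class use the measure-preserving change of variables justified by (arithmetic) non-degeneracy to invoke Lemma \ref{LemmaInt}, and finally sum the geometric progressions. The paper itself only details the interior-segment case and defers the remaining bookkeeping (the vertex case with the face function $G$, endpoint conventions, and the series summation) to Saia--Z\'u\~niga-Galindo, which is exactly the set of details you flag as remaining work, so your outline is consistent with and essentially identical to the published argument.
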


\begin{proof} The proof is a slightly variation of the proof of Proposition 5.3 in \cite{SaZu}. In order to give some insight about the role of the arithmetic Newton polygon of $f$, we present here some details of the proof.
	
The first step is to note that for $(x,y)\in (O_v^\times)^2$, $\theta\in R(f_0)$ and $k\geq 1+l(f_0)$,
\begin{equation}\label{Efe eme}
F^{(m)}(x,\theta + \pi^ky)=c_j \pi^{(d_j-d_0)m+ke_{j,\theta}}\gamma_j(x,y)y^{e_{j,\theta}},
\end{equation}
where $c_j\in L_v^\times$ and the $\gamma_j$'s are polynomials satisfying $|\gamma_j(x,y)|=1$ for any  $(x,y) \in O_v^{\times 2}$. Then we associate to each term in 
\eqref{Efe eme} a straight line of the form $\widetilde{w}_{j,\theta}(\widetilde{z}):=(d_j-d_0)m+e_{j,\theta} \widetilde{z}$, for $j=0,1,\ldots, l_f$. We also associate to $F^{(m)}(x,\theta+\pi^ky)$ the convex set
\begin{equation*}
			\Gamma_{F^{(m)}(x,\theta + \pi^ky)}=\{(\widetilde{z},\widetilde{w}) \in \mathbb{R}_+^2\mid \widetilde{w} \leqslant  \min\limits_{0 \leqslant j \leqslant l_f}\{\widetilde{w}_{j,\theta}(\widetilde{z})\}\}.
		\end{equation*}
As it was noticed in \cite{SaZu}, the polygon $\Gamma_{F^{(m)}(x,\theta + \pi^ky)}$ is a rescaled version of $\Gamma_{f,\theta}$. Thus the vertices of $\Gamma_{F^{(m)}(x,\theta + \pi^ky)}$ can be described in terms of the vertices of $\Gamma_{f,\theta}$. More precisely, the vertices of $\Gamma_{F^{(m)}(x,\theta + \pi^ky)}$  are 
\[\mathcal{V}_i:=\begin{cases}
		(0,0)  & \textit{if }  i=0\\
		(m\tau_i,(D_i-d_0)m+m\varepsilon_i \tau_i) & \textit{if }  i=1,2,\ldots, r,
		\end{cases}\]
where the $\tau_i$ are the abscissas of the vertices of $\Gamma_{f^{(m)},\theta}$.  The crucial fact in our proof is that $F^{(m)}(x,\theta+\pi^ky)$, may take different forms depending of the place that $k$ occupies with respect to the abscissas of the vertices of  $\Gamma_{F^{(m)}(x,\theta + \pi^ky)}$. This leads to the cases: (i) 
$m\tau_i < k < m\tau_{i+1}$, (ii) $k > m\tau_{r}$, and (iii)  $k = m\tau_{i}$. We only consider here the first case.

When $m\tau_i < k < m\tau_{i+1}$, there exists  some $j_\star\in \{0, \ldots , l_f \}$ such that 
\[(d_{j_\star}-d_0)m+k \varepsilon
_{j_\star}=(\mathcal{D}_{i+1}-d_0)m+k\varepsilon_{i+1},\]
and \[(d_{j_\star}-d_0)m+k \varepsilon
_{j_\star}<(d_{j}-d_0)m+k \varepsilon
_{j},  \]
for $j\in \{0, \ldots , l_f \}\setminus\{j_\star\}$.  In consequence
\[F^{(m)}(x,\theta+\pi^ky)= \pi^{-(D_{i+1}-d_0)m-\varepsilon_{i+1}k} (\gamma_{i+1}(x,y)y^{e_{i+1,\theta}}+\pi^{m(D_{i+1}-D_i)}(\cdots))\]				
for any $(x,y) \in O_v^{\times 2}$, where
\begin{gather*}
\gamma_{i+1}(x,y)y^{e_{i+1,\theta}}+\pi^{m(D_{i+1}-D_i)}(\cdots)\\
=\gamma_{i+1}(x,y)y^{e_{i+1,\theta}}+\pi^{m(D_{i+1}-D_i)}(\text{terms with weighted degree} \geqslant D_{i+1}).
\end{gather*}
We put $g(x,y):=\gamma_{i+1}(x,y)y^{e_{i+1,\theta}}+\pi^{m(D_{i+1}-D_i)}(\cdots)$. Then 
\begin{gather*}
\int\limits_{O_v^{\times 2}}\chi(ac (F^{(m)}(x,\theta+\pi^{k} y)))\ |F^{(m)}(x,\theta+\pi^{k} y)|^s\ |dxdy|\\
= q^{-(D_{i+1}-d_0)ms-\varepsilon_{i+1}ks}\int\limits_{O_v^{\times 2}}\chi(ac ( g(x,y))\ |g(x,y)|^s\ |dxdy|.
\end{gather*}
By using  the following partition of  $O_v^{\times^ 2}$, 
\begin{equation}\label{partition}
O_v^{\times^ 2} =\bigsqcup_{(\overline{a},\overline{b}) \in (\mathbb{F}_q^{\times})^2}(a,b)+(\pi O_v)^2,
\end{equation}
we have 
\begin{equation}
\begin{gathered}\label{IntOv}
\int\limits_{O_v^{\times 2}}\chi(ac ( g(x,y))\ |g(x,y)|^s\ |dxdy|\\
=\sum_{(\overline{a},\overline{b}) \in (\mathbb{F}_q^{\times})^2}\int\limits_{(a,b)+(\pi O_v)^2}\chi(ac \ g(x,y))|g(x,y)|^s |dxdy|\\
=\sum_{(\overline{a},\overline{b})\in (\mathbb{F}_q^{\times})^2}\int\limits_{O_v^{2}}\chi(ac \ g(a+\pi x,b+\pi y))\ |g(a+\pi x,b+\pi y)|^s\ |dxdy|.
\end{gathered}
\end{equation}
Since $\frac{\partial{\overline{g}}}{\partial y}(\overline{a},\overline{b}) \not\equiv 0 \pmod{\pi}$ for $(\overline{a},\overline{b})\in (\mathbb{F}_q^{\times})^2$, the following is a measure preserving map from $O_v^2$ to itself:
\begin{equation}\label{genchangeofv}
\begin{cases}
z_1=x&\\
z_2=\frac{g(a+\pi x,b+\pi y)-g(a,b)}{\pi}.&
\end{cases}
\end{equation}
By using (\ref{genchangeofv}) as a change of  variables, (\ref{IntOv}) becomes:
\[ \sum_{(\overline{a},\overline{b}) \in (\mathbb{F}_q^{\times})^2}\int\limits_{O_v}\chi(ac \ (g(a,b)+\pi z_2))\ |g(a,b)+\pi z_2|^s\ |dz_2|,\]
and then Lemma \ref{LemmaInt} implies that the later sum equals
\[\begin{cases}\frac{q^{-s}(1-q^{-1})N_g}{(1-q^{-1-s})}+(q-1)^2-N_g  & \textit{if }  \chi=\chi_{triv}\\
\\
\sum\limits_{\substack{(\overline{a},\overline{b}) \in ({\mathbb{F}_q}^{\times})^ 2\\
		\overline{g}(\overline{a},\overline{b})\neq 0}}\chi(ac (g(a,b))) & \textit{if }  \chi\neq\chi_{triv} \text{ and }  \chi|_{U}=\chi_{triv}\\
0  &    \textit{all other cases},
\end{cases}
\]
where   $U=1+\pi O_v$, and $N_g=\text{Card}\{(\overline{a},\overline{b}) \in ({\mathbb{F}_q}^{\times})^2\mid\overline{g}(\overline{a},\overline{b})=0\}$. The rest of the proof follows the same strategy of the proof in \cite{SaZu}.
\end{proof}
\subsection{Poles of $\mathbf{Z(s,f,\chi,\Delta)}$}
\begin{definition}
For a semi quasihomogeneous polynomial $f(x,y) \in L_v[x,y]$ which is non degenerate with respect to $\Gamma^A(f)=\bigcup\limits_{\{\theta \in O_v \mid f_0(1,\theta^a)=0\}} \Gamma_{f,\theta},$  we define
\begin{gather*}
\mathcal{P}(\Gamma_{f,\theta}):=\bigcup\limits_{i=1}^{r_\theta}\left\{-\frac{1}{\varepsilon_i}, -\frac{(a+b)+\tau_i}{\mathcal{D}_{i+1}+\varepsilon_{i+1}\tau_i}, -\frac{(a+b)+\tau_i}{\mathcal{D}_i+\varepsilon_i\tau_i}\right\} \cup \bigcup\limits_{\{\varepsilon_{r+1} \neq 0\}} \left\{-\frac{1}{\varepsilon_{r+1}}\right\},\\
\intertext{and}
\mathcal{P}(\Gamma^A(f)):=\bigcup\limits_{\{\theta \in O_v | f_0(1,\theta^a)=0\}} \mathcal{P}(\Gamma_{f,\theta}).
\end{gather*}
Where  $\mathcal{D}_i, \varepsilon_i, \tau_i$  are obtained form the equations of the straight segments that form the boundary of $\Gamma_{f,\theta}$, cf \eqref{wline},\eqref{wliner}, and \eqref{vertex}.
\end{definition}
\begin{theorem}\label{Thm 5.1}
Let  $\Delta:=(a,b)\mathbb{R}_+$ and let $f(x,y)=\sum\limits_{j=0}^{l_f} f_j(x,y) \in O_v[x,y]$  be a semi-quasihomogeneous polynomial, with respect to the weight $(a,b)$, with $a, b$ coprime, and $f_j(x,y)$ as in  (\ref{quasihom}).  If $f(x,y)$ is arithmetically non--degenerate with respect to $\Gamma^{A}(f)$, then the real parts of the poles of $Z(s,f,\chi,\Delta)$ belong to the set  \[\{-1\} \cup \left\{-\frac{a+b}{d_0}\right\} \cup \{\mathcal{P}(\Gamma^{A}(f))\}.\]
	
In addition  $Z(s,f,\chi,\Delta)=0$ for almost all $\chi$. More precisely,  $Z(s,f,\chi,\Delta)=0$ if $\chi|_{1+\pi O_v}\neq  \chi_{triv}.$
\end{theorem}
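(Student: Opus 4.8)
The plan is to combine the reduction in Proposition \ref{Prop5.2} with the explicit formula in Proposition \ref{Prop5.3}, and then track the denominators that can appear. First I would apply the homogeneity of the integral under the substitution $(x,y)\mapsto(\pi^{a}x,\pi^{b}y)$ on the cone $\Delta=(a,b)\mathbb{R}_+$: writing $E_\Delta$ as the disjoint union over $m\geq 1$ of $\pi^{am}O_v^\times\times\pi^{bm}O_v^\times$, and recalling $f^{(m)}(x,y)=\pi^{-d_0m}f(\pi^{am}x,\pi^{bm}y)$, one gets
\[
Z(s,f,\chi,\Delta)=\sum_{m\geq 1}q^{-(a+b)m-d_0ms}\,I(s,f^{(m)},\chi),
\]
where $I(s,f^{(m)},\chi)=I(s,F^{(m)},\chi)$ after the measure-preserving change of variables $\Phi$ (this is exactly the setup preceding Proposition \ref{Prop5.2}, noting that $\chi(ac\,(x^{N_i}y^{M_i}))$ contributes a unit factor of modulus one and does not affect convergence). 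So the whole question reduces to understanding, as a function of $s$ and $m$, the three pieces produced by Proposition \ref{Prop5.2} and Proposition \ref{Prop5.3}, summed against $q^{-(a+b)m-d_0ms}$ over $m\geq1$.

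Next I would treat the last sentence of the theorem first, since it is the cheapest: by part (3) of Proposition \ref{Prop5.3}, each $J_\theta(s,m,\chi)=0$ whenever $\chi|_{1+\pi O_v}\neq\chi_{triv}$, and the term $U_0(q^{-s},\chi)/(1-q^{-1-s})$ in Proposition \ref{Prop5.2} comes from the stationary phase formula (Lemma \ref{SPF}) applied on $O_v^{\times2}$ minus the tubes around the roots of $f_0$; inspecting that computation, the character sum there also vanishes for such $\chi$ (this is the $\nu_T$, $\sigma_T$ dichotomy: $\sigma_T=0$ and $\nu_T$ is a character sum over units that vanishes when $\chi$ is nontrivial on $1+\pi O_v$). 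Hence $I(s,F^{(m)},\chi)\equiv 0$ for all $m$, giving $Z(s,f,\chi,\Delta)=0$. That disposes of the second claim.

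For the location of the poles, I would plug the formulas of Proposition \ref{Prop5.3} into the sum over $m$ and collect denominators. The factor $1/(1-q^{-1-s})$ from $U_0$ and from every $M_g$, $M_{g_r}$, $M_G$ contributes the candidate pole with real part $-1$. Summing $q^{-(a+b)m-d_0ms}$ times the $m$-free part of $U_0$ over $m\geq 1$ produces the geometric factor $1/(1-q^{-(a+b)-d_0 s})$, i.e. the candidate real part $-\tfrac{a+b}{d_0}$. The genuinely delicate part is the three families of terms in Proposition \ref{Prop5.3}(1)–(2), where the exponents involve $[m\tau_i]$; here I would follow \cite{SaZu} and split the sum over $m$ according to the residue of $m$ modulo the (common) denominator of the $\tau_i$, so that $[m\tau_i]$ becomes linear in $m$ on each residue class. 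On each such class the $m$-sum is again geometric, and a direct computation of the resulting ratios — exactly the bookkeeping behind the definition of $\mathcal{P}(\Gamma_{f,\theta})$ — shows the new denominators are, up to roots of unity, of the form $1-q^{-(\mathcal{D}_i+\varepsilon_i\tau_i)(\cdots)-((a+b)+\tau_i)(\cdots)s}$ and $1-q^{-\cdots-\varepsilon_i s}$ and $1-q^{-\cdots-\varepsilon_{r+1}s}$, whose zeros have real parts $-\tfrac{(a+b)+\tau_i}{\mathcal{D}_i+\varepsilon_i\tau_i}$, $-\tfrac{(a+b)+\tau_i}{\mathcal{D}_{i+1}+\varepsilon_{i+1}\tau_i}$, $-\tfrac1{\varepsilon_i}$ and $-\tfrac1{\varepsilon_{r+1}}$. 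Taking the union over $i=1,\dots,r_\theta$ and over $\theta\in R(f_0)$ gives precisely $\mathcal{P}(\Gamma^A(f))$, and the theorem follows.

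**Main obstacle.** The heart of the argument — and the only place where real work is needed beyond citing \cite{SaZu} — is the residue-class decomposition of the $m$-sum: one must verify that after replacing $[m\tau_i]$ by its linear expression on each class modulo $\operatorname{lcm}$ of the denominators of the $\tau_i$, the accumulated exponents of $q$ in the numerators and denominators organize exactly into the linear forms $\mathcal{D}_i+\varepsilon_i\tau_i$ and $(a+b)+\tau_i$ appearing in $\mathcal{P}(\Gamma_{f,\theta})$, with no extra spurious poles surviving the cancellation between the three families of terms. I expect this to be a careful but essentially mechanical computation, identical in structure to the one in \cite{SaZu} with the character $\chi$ only affecting the numerators ($\Sigma_g$, $\Sigma_{g_r}$ in place of $M_g$, $M_{g_r}$, $M_G$), so it does not change the denominators and hence does not change the list of candidate poles.
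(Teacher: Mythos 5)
Your proposal follows essentially the same route as the paper's own proof: expand $Z(s,f,\chi,\Delta)$ over the cone as $\sum_{m\ge1}q^{-(a+b)m-d_0ms}I(s,F^{(m)},\chi)$, invoke Propositions \ref{Prop5.2} and \ref{Prop5.3}, and then sum over $m$, with the residue-class linearization of $[m\tau_i]$ being exactly the ``algebraic identities as in \cite{SaZu}'' that the paper cites. You even treat the vanishing claim for $\chi|_{1+\pi O_v}\neq\chi_{triv}$ (via Proposition \ref{Prop5.3}(3) and the coset-by-coset reduction to Lemma \ref{LemmaInt} for the $U_0$ part) more explicitly than the paper does, so the argument is correct and aligned with the original.
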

\begin{proof} 
The integral $Z(s,f,\chi,\Delta)$ admits the following expansion:
\begin{equation}\label{zeta delta}
\begin{split}
Z(s,f,\chi,\Delta)=\sum\limits_{m=1}^{\infty}\int\limits_{\pi^{am}O_v^\times \times \pi^{bm}O_v^\times} \chi(ac(f(x,y))\ |f(x,y)|^s\ |dxdy|\\
=\sum\limits_{m=1}^{\infty}q^{-(a+b)m-d_0ms}\int\limits_{O_v^{\times 2}}\chi(ac \ (F^{(m)}(x,y)))\ |F^{(m)}(x,y)|^s\ |dxdy|,
\end{split}
\end{equation}
where $F^{(m)}(x,y)$ is as in Proposition \ref{Prop5.2} (cf. also with \eqref{Efe eme}). Now, by Proposition \ref{Prop5.2} and Proposition \ref{Prop5.3}, we have
\begin{gather*}
\int\limits_{O_v^{\times 2}}\chi(ac \ (F^{(m)}(x,y)))\ |F^{(m)}(x,y)|^s\ |dxdy|= \frac{U_0(q^{-s}, \chi)}{1-q^{-1-s}}\\
+ \sum_{\{\theta \in O_v| f_0(1,\theta^a)= 0\}}J_\theta(s,m,\chi),
\end{gather*}
thus \eqref{zeta delta} implies
\[Z(s,f,\chi,\Delta)= \frac{U_0(q^{-s},\chi)}{1-q^{-1-s}}+\sum\limits_{\{\theta \in O_v^\times|f_0(1,\theta^a)=0\}} \left(\sum\limits_{m=1}^{\infty}q^{-(a+b)m-d_0ms}J_\theta(s,m,\chi)\right).\]

At this point we note that the announced result follows by using the explicit formula for $J_\theta(s,m,\chi)$ given in Proposition \ref{Prop5.3} and by using some algebraic identities involving terms of the form $[m\tau_i]$, as in the proof of \cite{SaZu}*{Theorem 5.1}.
\end{proof}
\begin{example}
Consider $f(x,y)=(y^3-x^2)^2+x^4y^4 \in L_v[x,y]$, as in Example \ref{Example1}. The polynomial $f(x,y)$ is a semiquasihomogeneous polynomial with respect to  the weight $(3,2)$, which is the generator of the cone $\Delta_5$, see Table \ref{table}. We note that $f(x,y)=f_0(x,y)+f_1(x,y)$, where  $f_0(x,y)=(y^3-x^2)^2$ and $f_1(x,y)=x^4y^4$, c.f. (\ref{Weig}). In this case $\theta=1$ is the only root of $f_0(1,y^3)$, thus $\Gamma^A(f)=\Gamma_{f,1}$. 

Since $f_0(t^3x,t^2y)=t^{12}f_0(x,y)$  and $f_1(t^3x,t^2y)=t^{20}f_1(x,y)$,  the numerical data for $\Gamma_{f,1}$ are: $a=3, b=2, \mathcal{D}_1=d_0=12, \tau_1=4, \varepsilon_1=2,$ and $\mathcal{D}_2=20$, then the boundary of the arithmetic Newton polygon $\Gamma_{f,1}$ is formed by the straight segments
\begin{equation*}
w_{0,1}(z)=2z\  (0 \leqslant z \leqslant 4),\quad\text{and,}\quad w_{1,1}(z)=8\ (z \geqslant 4),
\end{equation*}
together with the half--line $\{(z,w) \in \mathbb{R}_+^2 | w=0\}$. According to Theorem \ref{Thm 5.1}, the real parts of the poles of  $Z(s,f,\chi,\Delta_5)$  belong to the set $\{-1,-\frac{5}{12},-\frac{1}{2}, -\frac{9}{20}\}$, cf. (\ref{Exam 1})--(\ref{Exam 6}).
\end{example}

\section{Local zeta functions for arithmetically non-degenerate polynomials}
Take $f(x,y) \in L_v[x,y]$ be a non-constant polynomial satisfying $f(0,0)=0$. Assume that
\begin{equation}\label{Decomposition}
\mathbb{R}_+^2=\{(0,0)\} \cup \bigcup\limits_{\gamma \subset \Gamma^{\textit{geom}}(f)} \Delta_{\gamma},
\end{equation} 
is a simplicial conical subdivision subordinated to $\Gamma^{geom}(f).$ Let $a_{\gamma}=(a_1(\gamma),a_2(\gamma))$ be the perpendicular primitive vector to the edge $\gamma$ of $\Gamma^{geom}(f)$, we also denote by $\langle a_\gamma,x\rangle=d_a(\gamma)$ the equation of the corresponding supporting line (cf. Section \ref{Sec2}). We set
\[	\mathcal{P}(\Gamma^{geom}(f)):=\left\{\left.-\frac{a_1(\gamma)+ a_2(\gamma)}{d_a(\gamma)}\right| \gamma  \textit{ is an edge of }  \Gamma^{geom}(f),  \textit{ with }  d_a(\gamma) \neq 0\right\}.\]
\begin{theorem}\label{Thm 6.1}
Let $f(x,y) \in L_v[x,y]$ be a non-constant polynomial.  If $f(x,y)$ is arithmetically non-degenerate with respect to its arithmetic Newton polygon $\Gamma^{A}(f)$, then the real parts of the poles of  $Z(s,f,\chi)$ belong to the set
\[\{-1\} \cup \mathcal{P}(\Gamma^{\textit{geom}}(f)) \cup \mathcal{P}(\Gamma^{A}(f)).\]
In addition  $Z(s,f,\chi)$ vanishes for almost all $\chi$. 
\end{theorem}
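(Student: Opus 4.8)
The plan is to reduce the global statement to the local statements already established in the paper, using the conical subdivision \eqref{Decomposition} subordinate to $\Gamma^{geom}(f)$ and the decomposition \eqref{ZetaCones}. Recall that
\[
Z(s,f,\chi)=Z(s,f,\chi,O_v^{\times 2})+\sum_{\gamma\subset\Gamma^{geom}(f)}Z(s,f,\chi,\Delta_\gamma),
\]
where the sum runs over all proper faces of $\Gamma^{geom}(f)$, i.e. over vertices and edges. So the proof naturally splits into three kinds of contributions, and I would treat each in turn: (i) the unit piece $Z(s,f,\chi,O_v^{\times 2})$; (ii) the pieces $Z(s,f,\chi,\Delta_\gamma)$ attached to an edge $\gamma$; and (iii) the pieces attached to a vertex $\gamma$.

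\textbf{The unit piece.} First I would compute $Z(s,f,\chi,O_v^{\times 2})$ directly by Igusa's stationary phase formula (Lemma~\ref{SPF}) applied with $T=O_v^{\times 2}$. Since $f$ is arithmetically non-degenerate, condition (b) of the non-degeneracy definition guarantees that $\overline f$ has no singular points on $(\mathbb{F}_q^\times)^2$, so the integral over $S_T(f)$ vanishes and Lemma~\ref{SPF} gives a closed form whose only pole in $q^{-s}$ comes from the factor $(1-q^{-1-s})^{-1}$, contributing a pole with real part $-1$. Moreover, when $\chi\neq\chi_{triv}$ the term $\sigma_T(\overline f,\chi)=0$, and a character-sum argument shows $\nu_T(\overline f,\chi)$ vanishes unless $\chi|_{1+\pi O_v}=\chi_{triv}$; this is where the "vanishes for almost all $\chi$" assertion enters.

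\textbf{The edge pieces.} For an edge $\gamma$ with primitive normal vector $a_\gamma=(a_1(\gamma),a_2(\gamma))$, the cone $\Delta_\gamma=a_\gamma\mathbb{R}_+\setminus\{0\}$ is one-dimensional, and $f$ is semi-quasihomogeneous with respect to the weight $a_\gamma$. This is \emph{exactly} the situation of Theorem~\ref{Thm 5.1}: applying it with $(a,b)=a_\gamma$ and $d_0=d_a(\gamma)$, we get that the real parts of the poles of $Z(s,f,\chi,\Delta_\gamma)$ lie in
\[
\{-1\}\cup\Bigl\{-\tfrac{a_1(\gamma)+a_2(\gamma)}{d_a(\gamma)}\Bigr\}\cup\mathcal{P}(\Gamma^A_\gamma(f)),
\]
and that $Z(s,f,\chi,\Delta_\gamma)=0$ whenever $\chi|_{1+\pi O_v}\neq\chi_{triv}$. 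The middle term is a candidate in $\mathcal{P}(\Gamma^{geom}(f))$ by definition, and $\mathcal{P}(\Gamma^A_\gamma(f))\subseteq\mathcal{P}(\Gamma^A(f))$; so all these poles are accounted for. Here one must check that the hypothesis of Theorem~\ref{Thm 5.1}—arithmetic non-degeneracy with respect to $\Gamma^A_\gamma(f)$—follows from the global arithmetic non-degeneracy hypothesis, but this is immediate from the last definition of Section~\ref{Arit N and ND cond}.

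\textbf{The vertex pieces.} For a vertex $\gamma$ the cone $\Delta_\gamma$ is two-dimensional, spanned by the primitive normals $a_1,a_2$ of the two edges meeting at $\gamma$; one writes $E_{\Delta_\gamma}$ as a disjoint union over pairs $(n_1,n_2)$ with $(v(x),v(y))=n_1a_1+n_2a_2$ and, after scaling $x\mapsto\pi^{\langle\cdot\rangle}x$, is reduced to an integral over $O_v^{\times 2}$ of $|\,\overline f\text{-type expression}\,|^s$ where the reduction mod $\pi$ has no singular points on $(\mathbb{F}_q^\times)^2$ (non-degeneracy condition (b) again, now applied to the face function at the vertex). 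Stationary phase then yields a geometric series in $q^{-n_1},q^{-n_2}$ whose summation produces only poles with real part $-1$ or the two edge-values $-\frac{a_i(\gamma')+\,\cdots}{d(\gamma')}$ already in $\mathcal{P}(\Gamma^{geom}(f))$; this is the standard non-degenerate computation (as in the paper's Example, cones $\Delta_1,\dots,\Delta_4,\Delta_6,\dots,\Delta_9$), so I would cite it rather than redo it, noting again the vanishing for $\chi|_{1+\pi O_v}\neq\chi_{triv}$.

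\textbf{Conclusion and main obstacle.} Assembling (i)--(iii): every pole of $Z(s,f,\chi)$ has real part in $\{-1\}\cup\mathcal{P}(\Gamma^{geom}(f))\cup\mathcal{P}(\Gamma^A(f))$, and each summand—hence the whole sum—vanishes unless $\chi|_{1+\pi O_v}=\chi_{triv}$, which is the "almost all $\chi$" claim since only finitely many characters of $O_v^\times$ restrict trivially to $1+\pi O_v$. The genuinely delicate point is not the bookkeeping of candidate poles but verifying that the two-dimensional (vertex) cones really do fall under the \emph{ordinary} non-degeneracy analysis: one needs that arithmetic non-degeneracy of $f$ forces the vertex face functions $f_\tau$ of $\Gamma^{geom}(f)$ to have no singular zeros in the torus, so that no new poles beyond the geometric ones appear there. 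I expect this compatibility check—reconciling Kouchnirenko-type conditions on $\Gamma^{geom}(f)$ with the arithmetic conditions on the family $\Gamma^A(f)$—to be the main thing requiring care; once it is in place the rest is an application of Theorem~\ref{Thm 5.1}, Lemma~\ref{SPF}, and Lemma~\ref{LemmaInt}, exactly paralleling \cite{SaZu}*{Theorem 6.1}.
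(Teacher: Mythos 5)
Your overall strategy coincides with the paper's: decompose via \eqref{ZetaCones}, treat the unit piece with Lemma \ref{SPF}, and feed the edge cones into Theorem \ref{Thm 5.1} (the paper additionally splits the edge case according to whether $f_\gamma$ is singular on the torus, but invoking Theorem \ref{Thm 5.1} uniformly is harmless since its hypothesis is exactly the global arithmetic non-degeneracy restricted to the weight $a_\gamma$). The one place where your proposal has a real gap is precisely the point you flag as "the main obstacle": you leave unverified that the two-dimensional (vertex) cones fall under the ordinary non-degenerate analysis, and you frame this as a delicate compatibility check between the Kouchnirenko-type conditions on $\Gamma^{geom}(f)$ and the arithmetic conditions on $\Gamma^{A}(f)$. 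No such reconciliation is needed, and arithmetic non-degeneracy plays no role there: if $\gamma$ is a vertex of $\Gamma^{geom}(f)$, its face function $f_\gamma$ is a single monomial $a_{ij}x^iy^j$, which has no zeros at all on $(L_v^\times)^2$, let alone singular ones. This purely combinatorial observation is what the paper uses to dispose of the two-dimensional cones (citing \cite{ZNag}*{Proposition 4.1}), and without it your vertex step is an unproved assertion rather than a proof; with it, the step is immediate and the candidate poles produced there lie in $\{-1\}\cup\mathcal{P}(\Gamma^{geom}(f))$ as you claim.

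A smaller divergence concerns the vanishing statement on the vertex cones: you propose to extract it from the standard non-degenerate computation, which does work, whereas the paper instead observes that $\gamma$, being the intersection of two edges $\tau$ and $\mu$, makes $f$ semi-quasihomogeneous with respect to the barycentric weight $\tfrac{a_\tau+a_\mu}{2}$ (of weighted degree $\tfrac{d_a(\tau)+d_a(\mu)}{2}$) and then reuses Theorem \ref{Thm 5.1} to conclude $Z(s,f,\chi,\Delta_\gamma)=0$ when $\chi|_{1+\pi O_v}\neq\chi_{triv}$; this lets the paper avoid redoing any cone computation. Your treatment of the unit piece (no singular points of $\overline f$ on $(\mathbb{F}_q^\times)^2$ by condition (b), hence only the pole at $-1$ and vanishing unless $\chi|_{1+\pi O_v}=\chi_{triv}$) and your closing count of characters trivial on $1+\pi O_v$ are fine and match the paper's (terser) argument.
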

\begin{proof} 
Consider the conical decomposition (\ref{Decomposition}), then by (\ref{ZetaCones}) the problem of des\-cri\-be the poles of $Z(s,f,\chi)$ is reduced to the problem of des\-cri\-be the poles of $Z(s, f, \chi,O_v^{\times 2})$ and $Z(s,f,\chi,\Delta_{\gamma})$, where $\gamma$ a proper face of $\Gamma^{\textit{geom}(f)}$. By Lemma \ref{SPF}, the real part of the poles of $Z(s,f,\chi,O_v^{\times 2})$ is $-1$. 

For the integrals  $Z(s,f,\chi,\Delta_{\gamma}),$ we have two cases depending of the non degeneracy of $f$  with respect to $\Delta_\gamma$. If 
$\Delta_\gamma$ is a one--dimensional cone generated by $a_\gamma=(a_1(\gamma),a_2(\gamma))$, and $f_{\gamma}(x,y)$ does not have singularities on $(L_v^\times)^2$, then the real parts of the poles of $Z(s,f,\chi,\Delta_{\gamma})$ belong to the set 
\[\{-1\} \cup \left\{-\frac{a_1(\gamma)+ a_2(\gamma)}{d_{\gamma}}\right\} \subseteq \{-1\} \cup \mathcal{P}(\Gamma^{\textit{geom}}(f)).\]

If $\Delta_{\gamma}$ is a two--dimensional cone, $f_{\gamma}(x,y)$ is a monomial, and then it does not have singularities on the torus $(L_v^\times)^2$, in consequence $Z(s,f,\chi,\Delta_{\gamma})$ is an entire function as can be deduced from \cite{ZNag}*{Proposition 4.1}. If $\Delta_{\gamma}$ is a one-dimensional cone, and $f_{\gamma}(x,y)$ has not singularities on $(O_v^\times)^2,$ then $f(x,y)$ is a semiquasihomogeneous arithmetically non-degenerate polynomial, and thus by Theorem \ref{Thm 5.1}, the real parts of the poles of $Z(s,f,\chi,\Delta_{\gamma})$  belong to the set
\[\{-1\} \cup \left\{-\frac{a_1(\gamma)+ a_2(\gamma)}{d_{\gamma}}\right\} \cup \mathcal{P}(\Gamma^{A}(f)) \subseteq \{-1\} \cup \mathcal{P}(\Gamma^{\textit{geom}}(f)) \cup \mathcal{P}(\Gamma^{A}(f)).\]
From these observations the real parts of the poles of $Z(s,f,\chi)$ belong to the set  $\{-1\} \cup \mathcal{P}(\Gamma^{\textit{geom}}(f)) \cup \mathcal{P}(\Gamma^{A}(f)).$

Now we prove that $Z(s,f,\chi)$ vanishes for almost all $\chi$. From \eqref{Decomposition} and \eqref{ZetaCones} it is enough to show that the  integrals $Z(s,f,\chi,\Delta_{\gamma})=0$ for almost all $\chi$, to do so, we consider two cases. If $f$ is non--degenerate with respect to $\Delta_\gamma$, $Z(s,f,\chi,\Delta_{\gamma})=0$ for almost all $\chi$, as follows from the proof of \cite{ZNag}*{Theorem A}. On the other hand, when $f$ is degenerate with respect to $\Delta_\gamma$ and $\Delta_\gamma$ is a one dimensional cone generated by $a_\gamma$, then $f(x,y)$ is a semiquasihomogeneous polynomial with respect to the weight $a_\gamma$ , thus by Theorem \ref{Thm 5.1},  $Z(s,f,\chi,\Delta_{\gamma})=0$  when $\chi|_{1+\pi O_v}\neq  \chi_{triv}.$ If $\Delta_\gamma$ is a two dimensional cone, then $\gamma$ is a point. Indeed, it is the intersection point of two edges $\tau$ and $\mu$ of $\Gamma^{geom}(f)$, and satisfies the equations: \[\langle a_\tau,\gamma\rangle=d_a(\tau)\text{ and } \langle a_\mu,\gamma\rangle=d_a(\mu). \]
It follows that $f(x,y)$ is a semiquasihomogeneous polynomial with respect to the weight given by the barycenter of the cone: $\frac{a_\tau+a_\mu}{2}$. The weighted degree is $\frac{d_a(\tau)+d_a(\mu)}{2}$. Finally, we may use again Theorem \ref{Thm 5.1} to obtain the required conclusion.
\end{proof}

\section{Exponential Sums $\mod{\pi^m}$.}
\subsection{Additive Characters of a non--Archimedean local field}
We first assume that $L_v$ is a $p-$adic field, i.e. a finite extension of the field of $p-$adic numbers $\mathbb{Q}_p$. We recall that for a given $z={\sum_{n=n_0}^\infty z_np^n}\in\mathbb{Q}_p$,  with $z_n \in \{0, \ldots, p-1\}$ and $z_{n_0} \neq 0$, the \textit{fractional part} of $z$ is
\[\{z\}_p:=\begin{cases}
0 & \text{if }  n_0 \geq 0\\
\sum_{n=n_0}^{-1}z_np^n & \text{if } n_0 < 0.
\end{cases}\] 
Then for $z\in\mathbb{Q}_{p},\ \exp(2\pi\sqrt{-1}\left\{
z\right\}  _{p}),$  is an \textit{additive character} on
$\mathbb{Q}_{p}$, which is trivial on $\mathbb{Z}_{p}$ but not on $p^{-1}\mathbb{Z}_{p}$.

If $Tr_{L_v/\mathbb{Q}_{p}}(\cdot)$ denotes the trace function of the extension, then there exists an integer $d\geq0$ such that $Tr_{L_v/\mathbb{Q}_{p}}(z)\in\mathbb{Z}_{p}$ for $\left\vert z\right\vert \leq q^{d}$ but
$Tr_{L_v/\mathbb{Q}_{p}}(z_{0})\notin\mathbb{Z}_{p}$ for some $z_{0}$ with
$\left\vert z_{0}\right\vert =q^{d+1}$. $d$ is known as
\textit{the exponent of the different} of $L_v/\mathbb{Q}_{p}$ and by, e.g. \cite{We}*{Chap. VIII, Corollary of Proposition 1}
$d\geq e-1$, where $e$ is the ramification index of $L_v/\mathbb{Q}_{p}$. For $z\in L_v$, the additive character
\[
\varkappa(z)=\exp(2\pi\sqrt{-1}\left\{  Tr_{L_v/\mathbb{Q}_{p}}(\pi
^{-d}z)\right\}  _{p}),
\]
is \textit{a standard character} of $L_v$, i.e. $\varkappa$ is trivial on
$O_{v}$ but not on $\pi^{-1}O_{v}$. In our case, it is more convenient to
use
\[
\Psi(z)=\exp(2\pi\sqrt{-1}\left\{  Tr_{L_v/\mathbb{Q}_{p}}(z)\right\}
_{p}),
\]
instead of $\varkappa(\cdot)$, since we will use Denef's approach for estimating exponential sums, see
Proposition \eqref{Prop7.1} below.

Now, let $L_v$ be a local field of characteristic $p>0$, i.e. $L_v=\mathbb{F}_q((T))$. Take $z(T)=\sum_{i=n_0}^{\infty}z_iT^i\in L_v$, we define $Res(z(T)):=z_{-1}$. Then one may see that
\[\Psi(z(T)):=\exp(2\pi\sqrt{-1}\  Tr_{\mathbb{F}_q/\mathbb{F}_{p}}(Res(z(T)))),\]
is a standard additive character on $L_v$.
\subsection{Exponential Sums}
Let  $L_v$ be a non--Archimedean local field of arbitrary characteristic with valuation $v$, and take $f(x,y)\in L_v[x,y]$. The \textit{exponential sum} attached to $f$ is 
\[E(z,f):=q^{-2m}\sum\limits_{(x,y)\in (O_v/P_v^{m})^2}\Psi(z f(x,y))=\int\limits_{O_v^2}\Psi(z f(x,y))\ |dx dy|,\]
for $z=u\pi^{-m}$  where $u \in O_v^\times$ and $m \in \mathbb{Z}$. Denef found the following nice relation between $E(z,f)$ and $Z(s,f,\chi)$. We denote by  $\text{Coeff}_{t^k}Z(s,f,\chi)$ the coefficient $c_k$ in the power series expansion of $Z(s,f,\chi)$ in the variable $t=q^{-s}$.
\begin{proposition}[\protect{\cite{De}*{Proposition 1.4.4}}]\label{Prop7.1}
With the above notation
\begin{gather*}
	E(u\pi^{-m},f)=Z(0,f,\chi_{triv})+\text{Coeff}_{t^{m-1}}\frac{(t-q)Z(s,f,\chi_{triv})}{(q-1)(1-t)}\\+\sum_{\chi \neq \chi_{triv}}g_{\chi^{-1}}\chi(u)\text{Coeff}_{t^{m-c(\chi)}}Z(s,f,\chi),
\end{gather*}
where $c(\chi)$  denotes the conductor of $\chi$ and  $g_{\chi}$ is the Gaussian sum
\[	g_{\chi}= (q-1)^{-1}q^{1-c(\chi)}\sum\limits_{x\in (O_v/P_v^{c(\chi)})^\times} \chi(x)\ \Psi(x/\pi^{c(\chi)}).\]
\end{proposition}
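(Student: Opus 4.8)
The plan is to compute $E(u\pi^{-m},f)=\int_{O_v^2}\Psi(u\pi^{-m}f(x,y))\,|dxdy|$ directly, slicing the domain of integration according to the value $v(f(x,y))=n$ and identifying each slice with a Taylor coefficient of one of the zeta functions $Z(s,f,\chi)$. We may assume $m\geq 1$: if $m\leq 0$ then $u\pi^{-m}f(x,y)\in O_v$ for all $(x,y)$, so $\Psi$ is identically $1$ and $E(u\pi^{-m},f)=\text{meas}(O_v^2)=1$, whereas on the right-hand side every coefficient carries a negative index (recall $c(\chi)\geq 1$) and hence vanishes, leaving $Z(0,f,\chi_{triv})$, which also equals $\text{meas}\{f\neq 0\}=1$. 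So assume $m\geq 1$, discard the null set $\{f=0\}$, and on the stratum $\{v(f)=n\}$ write $f(x,y)=\pi^n\,ac(f(x,y))$ with $ac(f(x,y))\in O_v^\times$, so that $\Psi(u\pi^{-m}f(x,y))=\Psi(u\pi^{-(m-n)}ac(f(x,y)))$. For $n\geq m$ the argument of $\Psi$ lies in $O_v$, hence $\Psi$ equals $1$ there, and the contribution of these strata is $\sum_{n\geq m}\text{meas}\{v(f)=n\}$. Expanding $Z(s,f,\chi)=\sum_{n\geq 0}\big(\int_{\{v(f)=n\}}\chi(ac(f(x,y)))\,|dxdy|\big)t^n$ with $t=q^{-s}$ identifies $\text{meas}\{v(f)=n\}$ with $\text{Coeff}_{t^n}Z(s,f,\chi_{triv})$; these coefficients are non-negative with finite sum $\text{meas}\{f\neq 0\}$, so the series converges at $t=1$ and $\sum_{n\geq 0}\text{meas}\{v(f)=n\}=Z(0,f,\chi_{triv})$. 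Hence this part equals $Z(0,f,\chi_{triv})-\text{Coeff}_{t^{m-1}}\dfrac{Z(s,f,\chi_{triv})}{1-t}$.

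For the strata with $0\leq n\leq m-1$ set $j:=m-n\geq 1$. The function $w\mapsto\Psi(u\pi^{-j}w)$ on $O_v^\times$ is invariant under $1+\pi^j O_v$, hence is a linear combination of the multiplicative characters of $O_v^\times$ of conductor at most $j$, and Fourier inversion over the compact group $O_v^\times$ gives
\[\Psi(u\pi^{-j}w)=\frac{1}{1-q^{-1}}\sum_{c(\chi)\leq j}\left(\int_{O_v^\times}\Psi(u\pi^{-j}w')\,\chi^{-1}(w')\,|dw'|\right)\chi(w).\]
The inner integral is a local Gauss-sum integral: partitioning $O_v^\times$ into residue classes modulo $\pi^j$, and for $\chi\neq\chi_{triv}$ into classes modulo $\pi^{c(\chi)}$, one checks that it vanishes unless $j=c(\chi)$ (when $\chi\neq\chi_{triv}$) or $j=1$ (when $\chi=\chi_{triv}$); in the first surviving case the substitution $w'\mapsto u^{-1}w'$ turns it into the sum defining $g_{\chi^{-1}}$ and it equals $(1-q^{-1})\,g_{\chi^{-1}}\,\chi(u)$, and in the second it equals $-q^{-1}$. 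Hence for every $j\geq 1$,
\[\Psi(u\pi^{-j}w)=-\frac{1}{q-1}\,\delta_{j,1}+\sum_{\substack{\chi\neq\chi_{triv}\\ c(\chi)=j}}g_{\chi^{-1}}\,\chi(u)\,\chi(w).\]

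Substituting $w=ac(f(x,y))$, integrating over $\{v(f)=n\}$ in $(x,y)$, and using $\int_{\{v(f)=n\}}\chi(ac(f(x,y)))\,|dxdy|=\text{Coeff}_{t^n}Z(s,f,\chi)$ with $n=m-j=m-c(\chi)$, the contribution of the strata $0\leq n\leq m-1$ becomes
\[-\frac{1}{q-1}\,\text{Coeff}_{t^{m-1}}Z(s,f,\chi_{triv})+\sum_{\chi\neq\chi_{triv}}g_{\chi^{-1}}\,\chi(u)\,\text{Coeff}_{t^{m-c(\chi)}}Z(s,f,\chi),\]
where the sum runs over all non-trivial $\chi$ since terms of negative index vanish. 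Adding this to the contribution of the strata $n\geq m$, the terms containing $Z(s,f,\chi_{triv})$ combine into $Z(0,f,\chi_{triv})-\text{Coeff}_{t^{m-1}}\big(\tfrac{1}{1-t}+\tfrac{1}{q-1}\big)Z(s,f,\chi_{triv})$, and the elementary identity $\tfrac{1}{1-t}+\tfrac{1}{q-1}=-\tfrac{t-q}{(q-1)(1-t)}$ rewrites this as $Z(0,f,\chi_{triv})+\text{Coeff}_{t^{m-1}}\tfrac{(t-q)Z(s,f,\chi_{triv})}{(q-1)(1-t)}$, which is the asserted formula.

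I expect the genuine obstacle to be the Gauss-sum step of the second paragraph: showing that in the character expansion of $\Psi(u\pi^{-j}\cdot)$ on $O_v^\times$ only characters of conductor \emph{exactly} $j$ survive, and that their coefficients are precisely $g_{\chi^{-1}}\chi(u)$ in the normalisation of the statement. This is where the bookkeeping over $O_v/P_v^{c(\chi)}$, the factor $1-q^{-1}=\text{meas}(O_v^\times)$, and the placement of the twist by $u$ all have to be tracked exactly; the rest of the argument is merely a reorganisation of power-series coefficients.
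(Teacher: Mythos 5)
Your proof is correct. The paper does not actually prove this proposition---it is quoted directly from Denef's report (Proposition 1.4.4)---and your argument (stratifying $O_v^2$ by $v(f)=n$, expanding $w\mapsto\Psi(u\pi^{-j}w)$ on $O_v^\times$ into multiplicative characters, where only the characters of conductor exactly $j$ survive with coefficients $g_{\chi^{-1}}\chi(u)$, plus the trivial character contributing $-1/(q-1)$ when $j=1$, and then reassembling the power-series coefficients of $Z(s,f,\chi)$ in $t=q^{-s}$) is precisely the standard derivation behind Denef's formula, so there is no genuinely different route to compare.
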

We recall here that the \textit{critical set} of $f$ is defined as \[C_f:=C_f(L_v)=\{(x,y)\in L_v^2 \mid \nabla f(x,y)=0\}.\]
We also define \[\beta_{\Gamma^{geom}}=\max_{\gamma \text{ edges of } \Gamma^{geom}(f)}\left\{ \left.-\frac{a_1(\gamma)+ a_2(\gamma)}{d_a(\gamma)}\right| d_a(\gamma) \neq 0 \right\},\]
and \[\beta_{\Gamma_\theta^A}:=\max_{\theta\in R(f_0)}\{\mathcal{P}\mid \mathcal{P}\in \mathcal{P}(\Gamma_{f,\theta})\}.\]

\begin{theorem}\label{Thm 7.1}
	Let $f(x,y)\in L_v[x,y]$ be a non constant polynomial which is arithmetically modulo $\pi$ non--degenerate with respect to its arithmetic Newton polygon. Assume that  $C_f \subset f^{-1}(0)$ and assume all  the notation introduced previously. Then the following assertions hold.
	\begin{enumerate} 
		\item For $|z|$ big enough,  $E(z,f)$  is a finite linear combination of functions of the form
		\[ \chi( ac \ z)|z|^{\lambda}(\log_q\ |z|)^{j_\lambda},\]
		with coefficients  independent of $z$, and $\lambda\in \mathbb{C}$  a pole of  $(\!1-q^{-s-1}\!)Z(\!s,f,\chi_{triv}\!)$ or  $Z(s,f,\chi)$ (with $\chi|_{1+\pi O_v}=\chi_{triv}$), where \[j_\lambda=\begin{cases}
		0 & \text{if } \lambda \text{ is a simple pole}\\
		0,1 & \text{if } \lambda \text{ is a double pole.}\\
		\end{cases}\]
		Moreover all the poles $\lambda$ appear effectively in this linear combination.
		\item Assume that $\beta:=\max\{\beta_{\Gamma^{geom}},\beta_{\Gamma_\theta^A}\}>-1$. Then for $|z|>1$, there exist a positive constant $C(L_v)$, such that
		\[  |E(z)|\leqslant C(L_v)|z|^\beta\log_q\ |z|.\]
	\end{enumerate}	 
\end{theorem}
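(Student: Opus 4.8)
The plan is to derive Theorem \ref{Thm 7.1} directly from Denef's formula (Proposition \ref{Prop7.1}) together with the description of the poles of $Z(s,f,\chi)$ provided by Theorem \ref{Thm 6.1}. The starting observation is that, under the hypothesis $C_f\subset f^{-1}(0)$, the term $Z(0,f,\chi_{triv})$ contributes only a constant (absorbed into lower order terms), and the behaviour of $E(u\pi^{-m},f)$ for large $m$ is governed by the coefficients $\mathrm{Coeff}_{t^{m-1}}\frac{(t-q)Z(s,f,\chi_{triv})}{(q-1)(1-t)}$ and $\mathrm{Coeff}_{t^{m-c(\chi)}}Z(s,f,\chi)$. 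By Theorem \ref{Thm 6.1} each $Z(s,f,\chi)$ is a rational function of $t=q^{-s}$ whose poles lie over the finite set $\{-1\}\cup\mathcal{P}(\Gamma^{geom}(f))\cup\mathcal{P}(\Gamma^{A}(f))$; moreover $Z(s,f,\chi)=0$ unless $\chi|_{1+\pi O_v}=\chi_{triv}$, so the sum over $\chi$ in Denef's formula is finite. Hence both coefficient sequences are finite linear combinations of sequences of the form $m^{j}\,q^{\mu m}$ where $q^{-\mu}$ runs over the poles (the factor $\frac{1}{1-t}$ in the first term introduces the extra pole at $t=1$, i.e. $\lambda=0$, which however does not affect the asymptotics when $\beta>-1$).

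The first step is to make the standard partial fraction computation explicit: if $R(t)$ is a rational function with a pole of order $k$ at $t=t_0=q^{-\lambda}$, then $\mathrm{Coeff}_{t^{n}}R(t)$ grows like $n^{k-1}t_0^{-n}=n^{k-1}q^{\lambda n}$; writing $n=m-c(\chi)$ and $|z|=q^m$ this becomes $|z|^{\lambda}(\log_q|z|)^{k-1}$ up to a constant depending only on $\lambda$, $c(\chi)$, $L_v$, and the character $\chi(ac\,z)=\chi(u)$. Summing over the (finitely many) poles $\lambda$ and characters $\chi$, and over the orders $k$, yields assertion (1); the bound $j_\lambda\in\{0,1\}$ comes from the fact that, by the explicit formulas in Proposition \ref{Prop5.3} and Theorem \ref{Thm 5.1}, the poles of $Z(s,f,\chi,\Delta)$ arising from the arithmetic Newton polygon are at worst double (they are intersections of at most two of the linear factors appearing in the geometric series denominators), and likewise the geometric-polygon poles together with the extra $(1-q^{-1-s})^{-1}$ and $(1-t)^{-1}$ factors contribute at most a second-order pole. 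The claim that all poles appear effectively is the delicate part: one must check that no unexpected cancellations occur between the contributions of different cones or different characters, which is exactly where the non-degeneracy hypothesis is used — it guarantees that the residues predicted by the candidate poles are genuinely nonzero, following the effectivity arguments in \cite{SaZu} and \cite{ZNag}.

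For assertion (2), set $\beta=\max\{\beta_{\Gamma^{geom}},\beta_{\Gamma^A_\theta}\}$ and suppose $\beta>-1$. Among all poles $\lambda$ appearing in the linear combination of part (1) — namely $\lambda=-1$, the $\lambda\in\mathcal{P}(\Gamma^{geom}(f))$, and the $\lambda\in\mathcal{P}(\Gamma^A(f))$, plus possibly $\lambda=0$ coming from the $(1-t)^{-1}$ factor — the real parts are bounded above by $\max\{-1,\beta\}=\beta$ (the pole at $\lambda=0$ would dominate, but it only occurs through the first term of Denef's formula where the numerator $(t-q)Z(s,f,\chi_{triv})$ vanishes at $t=1$ precisely because $C_f\subset f^{-1}(0)$ forces $Z(s,f,\chi_{triv})$ to have no pole cancelling it — one must verify this cancellation, it is the analogue of the classical fact that $E(z,f)\to 0$ when the critical values are not reached). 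Thus the dominant term in $|E(z,f)|$ for $|z|>1$ is the one with $\lambda=\beta$, contributing $O(|z|^{\beta}(\log_q|z|)^{j_\beta})$ with $j_\beta\le 1$, while all other terms are $O(|z|^{\mathrm{Re}(\lambda)})=O(|z|^{\beta-\delta})$ for some $\delta>0$ or differ only by a log power; collecting constants into a single $C(L_v)$ gives $|E(z,f)|\le C(L_v)|z|^{\beta}\log_q|z|$.

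The main obstacle I anticipate is not the partial-fraction bookkeeping, which is routine, but two more subtle points: first, verifying the effectivity statement in part (1), i.e. that the non-degeneracy hypothesis really does prevent the leading residues from cancelling across the decomposition \eqref{ZetaCones} — this requires tracing through the explicit residue formulas of Proposition \ref{Prop5.3} and checking sign/coefficient conditions, exactly as in \cite{SaZu}*{proof of the main theorem}; and second, confirming that the hypothesis $C_f\subset f^{-1}(0)$ is used correctly to kill the would-be pole at $s=0$ (equivalently $t=1$) in the first Denef term, so that it does not spuriously contribute a constant term dominating $|z|^{\beta}$ when $\beta<0$ — this is the standard mechanism (see \cite{De}) by which the condition on the critical set translates into decay of the exponential sum, and it must be invoked explicitly rather than left implicit.
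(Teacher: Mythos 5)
Your overall route is the same as the paper's: the printed proof of Theorem \ref{Thm 7.1} is exactly the recipe you follow, namely combine Denef's formula (Proposition \ref{Prop7.1}) with the pole list and the vanishing statement of Theorem \ref{Thm 6.1}, and expand in partial fractions so that a pole of order $k$ at $t=q^{-\lambda}$ produces coefficients growing like $n^{k-1}q^{\lambda n}$, i.e.\ terms $\chi(ac\,z)\,|z|^{\lambda}(\log_q|z|)^{k-1}$; the finiteness of the sum over $\chi$ comes from $Z(s,f,\chi)=0$ unless $\chi|_{1+\pi O_v}=\chi_{triv}$. Most of that bookkeeping in your write-up is fine (you could add, for completeness, that the factor $(t-q)$ vanishes at $t=q$, i.e.\ at $s=-1$, which is why the trivial-character poles are those of $(1-q^{-s-1})Z(s,f,\chi_{triv})$).

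There is, however, one concrete error, and it sits exactly where part (2) could fail when $-1<\beta<0$: your handling of the pole at $t=1$. You assert that the numerator $(t-q)Z(s,f,\chi_{triv})$ vanishes at $t=1$ and attribute this to $C_f\subset f^{-1}(0)$. It does not vanish: at $t=1$ the numerator equals $(1-q)Z(0,f,\chi_{triv})$, which is nonzero in general. The correct mechanism is a cancellation internal to Denef's formula: the partial-fraction contribution of the simple pole at $t=1$ to $\mathrm{Coeff}_{t^{m-1}}\frac{(t-q)Z(s,f,\chi_{triv})}{(q-1)(1-t)}$ is the constant $-Z(0,f,\chi_{triv})$, which cancels exactly against the standalone summand $Z(0,f,\chi_{triv})$ in Proposition \ref{Prop7.1}. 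Only because of this cancellation is there no surviving $\lambda=0$ term, and only then can $|E(z)|\le C(L_v)|z|^{\beta}\log_q|z|$ hold with $\beta<0$; for the same reason your opening remark that $Z(0,f,\chi_{triv})$ is ``absorbed into lower order terms'' is not correct as stated, since a nonzero constant would dominate $|z|^{\beta}$ for negative $\beta$. The hypothesis $C_f\subset f^{-1}(0)$ is not what kills the $t=1$ pole; as in Denef's treatment it is the hypothesis under which the expansion of $E(z,f)$ for $|z|$ large in terms of poles of the twisted zeta functions is legitimate (here working jointly with the vanishing of $Z(s,f,\chi)$ for all $\chi$ with $\chi|_{1+\pi O_v}\neq\chi_{triv}$ from Theorem \ref{Thm 6.1}). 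With that step repaired, your argument matches the paper's intended proof.
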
		
\begin{proof}
	The proof follows by writing $Z(s,f,\chi)$ in partial fractions and using Proposition \ref{Prop7.1} and Theorem \ref{Thm 6.1}.
\end{proof}
\begin{acknowledgement}
	We want to thank to the anonymous referee for multiple useful comments and suggestions.
\end{acknowledgement}

\begin{bibdiv}
\begin{biblist}

\bib{AG-ZV}{book}{
	author={Arnol{\cprime}d, V. I.},
	author={Guse{\u\i}n-Zade, S. M.},
	author={Varchenko, A. N.},
	title={Singularities of differentiable maps. Vol. II},
	series={Monographs in Mathematics},
	volume={83},
	note={Monodromy and asymptotics of integrals;
		Translated from the Russian by Hugh Porteous;
		Translation revised by the authors and James Montaldi},
	publisher={Birkh\"auser Boston, Inc., Boston, MA},
	date={1988},
	pages={viii+492},
}

\bib{Clu08}{article}{
	author={Cluckers, Raf},
	title={Igusa and Denef-Sperber conjectures on nondegenerate $p$-adic
		exponential sums},
	journal={Duke Math. J.},
	volume={141},
	date={2008},
	number={1},
	pages={205--216},
}

\bib{Clu10}{article}{
	author={Cluckers, Raf},
	title={Exponential sums: questions by Denef, Sperber, and Igusa},
	journal={Trans. Amer. Math. Soc.},
	volume={362},
	date={2010},
	number={7},
	pages={3745--3756},
}

%
%
%
%
%
\bib{De84}{article}{
	author={Denef, Jan},
	title={The rationality of the Poincar\'e series associated to the
		$p$-adic points on a variety},
	journal={Invent. Math.},
	volume={77},
	date={1984},
	number={1},
	pages={1--23},
}

\bib{De}{article}{
	author={Denef, Jan},
	title={Report on Igusa's local zeta function},
	note={S\'eminaire Bourbaki, Vol.\ 1990/91},
	journal={Ast\'erisque},
	number={201-203},
	date={1991},
	pages={Exp.\ No.\ 741, 359--386 (1992)},
}

\bib{De95}{article}{
   author={Denef, Jan},
   title={Poles of $p$-adic complex powers and Newton polyhedra},
   journal={Nieuw Arch. Wisk. (4)},
   volume={13},
   date={1995},
   number={3},
   pages={289--295},
   
}

\bib{DeHoo}{article}{
	author={Denef, Jan},
	author={Hoornaert, Kathleen},
	title={Newton polyhedra and Igusa's local zeta function},
	journal={J. Number Theory},
	volume={89},
	date={2001},
	number={1},
	pages={31--64},
}

\bib{DeSp}{article}{
	author={Denef, J.},
	author={Sperber, S.},
	title={Exponential sums mod $p\sp n$ and Newton polyhedra},
	note={A tribute to Maurice Boffa},
	journal={Bull. Belg. Math. Soc. Simon Stevin},
	date={2001},
	number={suppl.},
	pages={55--63},
}

\bib{ICom}{article}{
	author={Igusa, Jun-ichi},
	title={Complex powers and asymptotic expansions. I. Functions of certain
		types},
	note={Collection of articles dedicated to Helmut Hasse on his
		seventy-fifth birthday, II},
	journal={J. Reine Angew. Math.},
	volume={268/269},
	date={1974},
	pages={110--130},
}

\bib{IBook}{book}{
	author={Igusa, Jun-ichi},
	title={An introduction to the theory of local zeta functions},
	series={AMS/IP Studies in Advanced Mathematics},
	volume={14},
	publisher={American Mathematical Society, Providence, RI; International
		Press, Cambridge, MA},
	date={2000},
	pages={xii+232},
}

\bib{Ko}{article}{
	author={Kouchnirenko, A. G.},
	title={Poly\`edres de Newton et nombres de Milnor},
	language={French},
	journal={Invent. Math.},
	volume={32},
	date={1976},
	number={1},
	pages={1--31},
}

\bib{LeIbSe}{article}{
	author={Le{\'o}n-Cardenal, Edwin},
	author={Ibadula, Denis},
	author={Segers, Dirk},
	title={Poles of the Igusa local zeta function of some hybrid polynomials},
	journal={Finite Fields Appl.},
	volume={25},
	date={2014},
	pages={37--48},
}

\bib{LeVeZu}{article}{
	author={Le{\'o}n-Cardenal, E.},
	author={Veys, Willem},
	author={Z{\'u}{\~n}iga-Galindo, W. A.},
	title={Poles of Archimedean zeta functions for analytic mappings},
	journal={J. Lond. Math. Soc. (2)},
	volume={87},
	date={2013},
	number={1},
	pages={1--21},
}

\bib{LiMe}{article}{
	author={Lichtin, Ben},
	author={Meuser, Diane},
	title={Poles of a local zeta function and Newton polygons},
	journal={Compositio Math.},
	volume={55},
	date={1985},
	number={3},
	pages={313--332},
}

\bib{SaZu}{article}{
	author={Saia, M. J.},
	author={Zuniga-Galindo, W. A.},
	title={Local zeta function for curves, non-degeneracy conditions and
		Newton polygons},
	journal={Trans. Amer. Math. Soc.},
	volume={357},
	date={2005},
	number={1},
	pages={59--88},
}

\bib{Var}{article}{
	author={Var{\v{c}}enko, A. N.},
	title={Newton polyhedra and estimates of oscillatory integrals},
	language={Russian},
	journal={Funkcional. Anal. i Prilo\v zen.},
	volume={10},
	date={1976},
	number={3},
	pages={13--38},
}

\bib{VeZu}{article}{
	author={Veys, Willem},
	author={Z{\'u}{\~n}iga-Galindo, W. A.},
	title={Zeta functions for analytic mappings, log-principalization of
		ideals, and Newton polyhedra},
	journal={Trans. Amer. Math. Soc.},
	volume={360},
	date={2008},
	number={4},
	pages={2205--2227},
}

\bib{We}{book}{
	author={Weil, Andr{\'e}},
	title={Basic number theory},
	series={Die Grundlehren der mathematischen Wissenschaften, Band 144},
	publisher={Springer-Verlag New York, Inc., New York},
	date={1967},
	pages={xviii+294},
}

\bib{ZSem}{article}{
	author={Z{\'u}{\~n}iga-Galindo, W. A.},
	title={Igusa's local zeta functions of semiquasihomogeneous polynomials},
	journal={Trans. Amer. Math. Soc.},
	volume={353},
	date={2001},
	number={8},
	pages={3193--3207},
}

\bib{ZNag}{article}{
	author={Z{\'u}{\~n}iga-Galindo, W. A.},
	title={Local zeta functions and Newton polyhedra},
	journal={Nagoya Math. J.},
	volume={172},
	date={2003},
	pages={31--58},
}

\end{biblist}
\end{bibdiv}

\end{document}